\newtheorem{theorem}{Theorem}[section]
\newtheorem{lemma}[theorem]{Lemma}
\newtheorem{corollary}[theorem]{Corollary}
\newtheorem{definition}[theorem]{Definition}
\declaretheorem[style=remark,qed=$\vartriangle$,sibling=theorem]{remark}
\numberwithin{equation}{section}
\newcommand{\eps}{\varepsilon}
\newcommand{\R}{\mathbb R}
\newcommand{\N}{\mathbb N}
\newcommand{\cE}{\mathcal E}
\newcommand{\cL}{\mathcal L}
\newcommand{\Lis}{\cL\mathrm{is}}
\newcommand{\identity}{\mathrm{Id}}
\DeclareMathOperator{\ran}{ran}
\DeclareMathOperator{\supp}{supp}
\DeclareMathOperator*{\argmin}{argmin}
\DeclareMathOperator{\divv}{div}
\newcommand{\nrm}{| \! | \! |}
\newcommand{\new}[1]{#1}
\newcommand{\be}{\begin{equation}}
\newcommand{\ee}{\end{equation}}
\newcommand{\tria}{{\mathcal T}}
\newcommand{\uumlaut}{{\"u}}
\par\begin{samepage}%
\title[A least squares finite element method for backward parabolic problems]{A least squares finite element method for backward parabolic problems}
\date{\today}
\author{Harald Monsuur}
\address{
Korteweg--de Vries (KdV) Institute for Mathematics, University of Amsterdam, P.O. Box 94248, 1090 GE Amsterdam, The Netherlands.
}
\email{harald.monsuur@hotmail.com}
\thanks{This research has been supported by the Netherlands Organization for Scientific Research (NWO)
under contract.~no.~613.009.138. 
We acknowledge the support of SURF (www.surf.nl) in using the National Supercomputer Snellius.}
\subjclass[2020]{
35B30, 
35B35, 
35B45, 
35R25, 
65J20, 
65M30, 
65M60
}
\keywords{Ill-posed problems, backward parabolic problems, conditional stability, Tikhonov regularization, least squares method}
\begin{document}

\begin{abstract} 
Backward parabolic equations, such as the backward heat equation, are classical examples of ill-posed problems where solutions may not exist or depend continuously on the data. In this work, we study a least squares finite element method to numerically approximate solutions to such problems. We derive conditional stability estimates for the weak formulation of inhomogeneous backward parabolic equations, assuming minimal regularity of the solution. These stability results are then used to establish \emph{a priori} error bounds for our proposed method. We address key computational aspects, including the treatment of dual norms through the construction of suitable test spaces, and iterative solutions. Numerical experiments are used to validate our theoretical findings.
\end{abstract}

\maketitle

\section{Introduction}

This work presents a numerical method for approximating solutions to backward parabolic problems. An example of such a problem is the backward heat equation. Let $J=(0,T)$ be a finite time interval and $\Omega\subset\R^d$ be a bounded Lipschitz domain. We define the space-time cylinder $Q:=J\times\Omega$. Given a source term $f\colon Q\to \R$ and final time data $g\colon \Omega\to \R$, the solution $u\colon Q\to \R$ of the backward heat equation satisfies
\be \label{eq:backward-heat}
\left\{\begin{array}{rl}
\partial_t u-\triangle_x u = f& \text{on }J\times\Omega,\\
u=0& \text{on }J\times \partial \Omega,\\
u(T)=g& \text{on }\Omega.
\end{array}
\right.
\ee

Backward parabolic problems are so-called \emph{ill-posed problems}. There is no continuous dependence of the solution $u$ on the data $(f,g)$, and for some data a solution might not even exist.

\subsection{Conditional stability estimates}
Given that backward parabolic problems are not well-posed, we are interested in obtaining so-called \emph{conditional stability estimates}. These are estimates in which the size of a solution (typically in a weaker norm) depends continuously on its data, provided that the solution satisfies some \emph{a priori} bound.

Conditional stability for backward parabolic problems has been studied extensively. For example, using logarithmic convexity arguments, it was shown in \cite{MR155203} that for the homogeneous heat equation
\begin{align}\label{eq:conditional stability intro}
    \|u(t)\|_{L_2(\Omega)}\leq \|u(0)\|_{L_2(\Omega)}^{\frac{T-t}{T}}\|u(T)\|_{L_2(\Omega)}^{\frac{t}{T}},\quad t\in [0,T].
\end{align}
Other methods for proving conditional stability estimates include energy estimates \cite{MR144083,MR216129} and Carleman estimates \cite{MR3460049}.

More general backward parabolic equations of the form
\begin{align}
    \partial_tu + A(t) u,
\end{align}
admit conditional stability results under certain regularity assumptions on the time-dependent operator  $A(t)$. See, for example, \cite{145, 168.825,MR4473789, MR2520059, MR3348915}. Usually, however, extra smoothness of the solution is required in these statements. 

Conditional stability estimates for general parabolic equations do not exist. In \cite{MR2520059, MR1686671, MR342822}, counterexamples can be found, where $A(t)$ depends only continuously on $t$.

In most of the works mentioned above, conditional stability estimates are provided for homogeneous parabolic equations only. Here, we extend the analysis to inhomogeneous problems, enabling the derivation of \emph{a priori} error estimates for the numerical method described below. 

In this work we will consider the weak formulation of the parabolic equation in the sense of \cite[Ch.~XVIII]{63}. In doing so, we will make minimal smoothness assumptions. For instance, for the heat equation, we will assume the solution $u\in\mathcal{X}:=L_2(J;H_0^1(\Omega))\cap H^1(J;H^{-1}(\Omega))$, when $f\in L_2(J; H^{-1}(\Omega))$ and $g\in L_2(\Omega)$.
In this setting, assuming additional domain regularity, we derive conditional stability estimates in the $L_2(J;H^\beta(\Omega))$-norm, with $\beta\in [0,1)$.
\subsection{Numerical solution}
The numerical solution of backward parabolic problems has been a topic of interest for several decades. Restricting to finite element methods, we mention an incomplete list of some approaches here, in which more references can be found. 

One class of methods comes from the method of quasi-reversibility \cite{185.8}.
Within this class, we mention \cite{19.897}, where through an application of the mixed formulation of quasi-reversibility, the appearance of fourth-order derivatives in the equation are avoided, making their method easily implementable. 
Convergence of the numerical solution in $H^1(Q)$, and for a similar method from \cite{28.5}, in $L_2(J;H^1(\Omega))$, can be shown, but no convergence rate is provided. 

Alternative approaches include methods based on eigenfunction expansions \cite{MR1377249, MR1461788} and the Laplace transform \cite{MR2257113}, which both yield error estimates in the $L_2(\Omega)$-norm for $t\in (0,T]$ for the homogeneous case.

In this work, we propose a \emph{least squares finite element method} defined over the space $\mathcal{X}$. For the heat equation, given some finite-dimensional subspace $\mathcal{X}^\delta\subseteq\mathcal{X}$, we solve
\begin{align}
    u^\delta_\eps:=\argmin_{z^\delta\in \mathcal{X}^\delta}\|\partial_tz^\delta - \triangle_x z^\delta\|_{L_2(J;H^{-1}(\Omega))}^2 + \|z^\delta(T)-g\|_{L_2(\Omega)}^2+ \eps^2\|z^\delta(0)\|_{L_2(\Omega)}^2,
\end{align}
where $\eps>0$ is some regularization parameter. We choose this regularization parameter based on an upper bound for the \emph{data error} and the \emph{approximation error}. The data error is defined as $\|\partial_tu - \triangle_x u - f\|_{L_2(J;H^{-1}(\Omega))} + \|u(t)-g\|_{L_2(\Omega)}$, which can be non-zero since the data may be inexact. The approximation error is defined as $\inf_{z^\delta\in \mathcal{X}^\delta}\|u-z^\delta\|_{\mathcal{X}}$.

Least squares methods of this type have been analyzed in detail in \cite{58.7}, where \emph{a priori} error bounds based on the available conditional stability estimates are proven. Qualitatively, given the available conditional stability estimate, these error bounds are the best possible in terms of the data error and the best-approximation error. 

A key computational challenge in this method is handling dual norms. This is addressed using so-called \emph{test spaces}, whose size depends on the chosen trial space $\mathcal{X}^\delta$. For the backward heat equation with Dirichlet boundary conditions, we prove that a finite element test space defined on the same mesh as $\mathcal{X}^\delta$, with a modest increase in polynomial degree, is sufficient. Numerical experiments suggest that even this increase may not be necessary.

We mention some advantages of our method. The method presented here gives rise to error estimates in the $L_2(\Omega)$-norm, but also in the $L_2(J;H^\beta(\Omega))$-norm, with $\beta\in [0,1)$, if the domain $\Omega$ possesses some additional regularity. Furthermore, given that our method is a finite element method, it can be directly applied to domains with more complex geometries. Thanks to the availability of uniform preconditioners \cite{249.991}, we can obtain solutions very efficiently and we are able to perform large-scale computations \cite{306.65,306.6}.

\subsection{Outline}
In Section~\ref{section:setting} we introduce the general framework for parabolic equations, and state our assumptions. 
In Section~\ref{section:conditional stability estimates} we prove a density result, followed by a regularity estimate that enables conditional stability in higher-order Sobolev norms.
In Section~\ref{sec:least squares} we present the least squares method for solving backward parabolic problems, derive \emph{a priori} error estimates, and describe an iterative solver. We give a concrete example in the form of the heat equation with Dirichlet boundary conditions. 
In Section~\ref{section:numerics}, the behavior of our method is analyzed numerically.

\subsection{Notations}
For normed linear spaces $E$ and $F$, by $\cL(E, F)$ we will denote the normed linear
space of bounded linear mappings $E \rightarrow F$. We write $E \hookrightarrow F$ to denote that $E$ is
continuously embedded into $F$. 

By $C \lesssim D$ we will mean that $C$ can be bounded by a multiple of $D$, unless explicitly stated otherwise, \emph{independently} of parameters which $C$ and $D$ may depend on, such as the discretisation index $\delta$.
Furthermore, $C \gtrsim D$ is defined as $D \lesssim C$, and $C\eqsim D$ as $C\lesssim D$ and $C \gtrsim D$.
\section{Abstract framework and assumptions}\label{section:setting}

In order to analyze conditional stability estimates for a broad class of backward parabolic problems, we begin by introducing an abstract variational framework. This setting generalizes classical examples such as the backward heat equation and allows for more general time-dependent parabolic operators.

\subsection{Abstract framework}

Let $V$ and $H$ be separable Hilbert spaces such that $V \hookrightarrow H$ is dense. Identifying $H$ with its dual $H'$, we obtain the Gelfand triple
\[
V \hookrightarrow H \hookrightarrow V'.
\]
This framework is standard in the theory of evolution equations and provides a natural setting to define weak (variational) solutions. It further allows for the convenient notation $\langle\cdot, \cdot\rangle_H$, both to denote the scalar product on $H\times H$ and its unique continuous
extension to the duality pairing on $V'\times V$. 

We consider a time-dependent sesquilinear form $a(t; \cdot, \cdot) \colon V \times V \to \mathbb{C}$ satisfying:
\begin{itemize}
    \item \textbf{Measurability}: For all $\eta, \zeta \in V$, the map $t \mapsto a(t; \eta, \zeta)$ is measurable on $J = (0,T)$;
    \item \textbf{Boundedness}: There exists $M_a > 0$ such that
    \[
    |a(t; \eta, \zeta)| \leq M_a \|\eta\|_V \|\zeta\|_V;
    \]
    \item \textbf{Coercivity up to shift}: There exist $\alpha > 0$ and $\lambda \in \mathbb{R}$ such that
    \[
    \Re a(t; \eta, \eta) + \lambda \|\eta\|_H^2 \geq \alpha \|\eta\|_V^2.
    \]
\end{itemize}
For almost every $t \in J$, define the linear operator $A(t) \in \mathcal{L}(V, V')$ by
\[
\langle A(t) \eta, \zeta \rangle_H := a(t; \eta, \zeta), \quad \text{for all } \eta, \zeta \in V.
\]
We now define the bilinear form $B \colon \mathcal{X} \to \mathcal{Y}'$ by
\begin{align}\label{eq:definition B}
(Bv)(w) := \int_J  \langle \partial_t v(t), w(t) \rangle_H + \langle A(t) v(t), w(t) \rangle_H  dt,
\end{align}
as well as the trace operator $\gamma_0 u := u(0)$. The function spaces are defined as:
\begin{equation}\label{eq:definition X and Y}
\begin{aligned}
\mathcal{X} &:= L_2(J; V) \cap H^1(J; V'), \\
\mathcal{Y} &:= L_2(J; V).
\end{aligned}
\end{equation}

The space $\mathcal{X}$ is chosen to ensure that both $\partial_t u(t)$ and $A(t) u(t)$ are well-defined in $V'$. Moreover, the embedding $\mathcal{X} \hookrightarrow C(\overline{J}; H)$ ensures that $u(t) \in H$ is well-defined for all $t \in [0, T]$ (see, e.g. \cite[Ch.~XVIII, §1, Th.1]{63}).

It is well known (see, e.g., \cite{63,314.9,247.15}) that the mapping
\begin{align}\label{eq:definition G}
G u := (B u, \gamma_0 u) \colon \mathcal{X} \to \mathcal{Y}' \times H
\end{align}
is an \emph{isomorphism}. This means that for any right-hand side $f \in \mathcal{Y}'$ and initial value $u(0) \in H$, there exists a unique $u \in \mathcal{X}$ satisfying the forward problem.

For convenience, we define the pointwise operator $B(t) \colon \mathcal{X} \to V'$ by
\[
(B(t) v)(w) := \langle \partial_t v(t), w \rangle_H + \langle A(t) v(t), w \rangle_H.
\]
Then, if $B u = f$ in $L_2(J; V')$, we have $B(t) u = f(t)$ for almost every $t \in J$.


\subsection{Additional assumptions}

\new{To establish conditional stability estimates in higher-order space-time Sobolev norms, we need two assumptions.\newline

\paragraph{\textbf{Conditional stability}}

We assume that the following conditional stability estimate holds for the homogeneous problem: when $u \in \mathcal{X}$ satisfies $Bu = 0$, then
\begin{equation}\label{eq:assumption conditional stability}
\|u(t)\|_{H} \lesssim \|u(0)\|_{H}^{1-\omega(t)} \|u(T)\|_{H}^{\omega(t)}, \quad t \in [0, T],
\end{equation}
where $\omega: [0,T] \to [0,1]$ is a \emph{strictly} increasing function with $\omega(0) = 0$ and $\omega(T) = 1$.
\begin{remark}
    Estimate~\eqref{eq:assumption conditional stability} holds with the linear function $\omega(t) = \frac{t}{T}$ in the case where $A(t) \equiv A \in \mathcal{L}(V, V')$ is a constant self-adjoint operator (see Appendix~\ref{sec:appendix} for a proof).

    When $A(t) = g(t) A$, where $g(t)$ is a continuous function satisfying $g(t)\in [g_-, g_+]$, with $g_-,g_+>0$, and $A \in \mathcal{L}(V, V')$ is self-adjoint, the function $\omega$ is given by
    \[
    \omega(t) = \frac{\int_0^t g(s) \, ds}{\int_0^T g(s) \, ds}.
    \]
   This follows from the arguments presented in \cite{MR4379761}.
\end{remark}

\paragraph{\textbf{Lipschitz continuity}}

We will assume that $t\mapsto A(t)\in \mathcal{L}(V,V')$ is Lipschitz continuous, that is,
\begin{align}\label{eq:assumption C1}
    |\left(\left(A(t)-A(s)\right)\eta\right)(\zeta)|\leq L_a |t-s| \|\eta\|_V\|\zeta\|_V,\quad \eta,\zeta\in V, t,s\in J,
\end{align}
for some constant $L_a>0$.
}

\section{Conditional stability estimates for backward parabolic problems}\label{section:conditional stability estimates}

\subsection{Density results}
We aim to establish conditional stability estimates for solutions $u \in \mathcal{X}$ satisfying $Bu = 0$. Later we will use the fact that $G$ defined in \eqref{eq:definition G} is an isomorphism, to establish conditional stability estimates for the inhomogeneous case. 

To establish conditional stability estimates for the homogeneous case, we use an argument that requires additional regularity of $u$. We will prove this additional regularity using a density argument. However, a straightforward approximation of $u$ by elements of $C^\infty(\overline{J};V)$ leads to the difficulty that such approximations generally do not preserve the property $Bu = 0$. 
In this section, we construct explicitly a sequence $\{v_\eps\} \subset C^\infty(\overline{J};V)$ that converges to $u$ in $\mathcal{X}$ and prove that it satisfies additional properties that overcome this difficulty.
\begin{lemma}\label{lemma:dense sequence}
    Let $u\in \mathcal{X}$. Let $\varphi_\eps(t):= t-2\eps \frac{t}{T}+\eps$ and let $\rho\in C^\infty(\R)$ be a smooth function such that $\int_\R \rho=1$ and $\supp \rho = (-1,1)$. Define 
    \begin{equation}\label{eq:definition sequence}
    \begin{aligned}
        v_\eps(t):&=\int_{-1}^1\rho(s) u(\varphi_\eps(t) + \eps s)ds\\
        &=\frac{1}{\eps}\int_J \rho\left(\frac{s-\varphi_\eps(t)}{\eps}\right)u(s)ds,
    \end{aligned}
    \end{equation}
    then for $\eps\to 0$ it holds that
    \begin{itemize}
        \item $v_\eps\to u$ in $\mathcal{X}$,
        \item $\eps \partial_t v_\eps\to 0$ in $L_2(J;V)$,
        \item $\eps \partial_t^2v_\eps\to 0$ in $L_2(J;V')$.
    \end{itemize}
\end{lemma}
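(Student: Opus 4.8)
The plan is to read $v_\eps$ as an ordinary mollification of $u$ precomposed with the affine reparametrization $\varphi_\eps$, and to exploit two structural features throughout. First, $\varphi_\eps$ maps $\overline J$ onto $[\eps,T-\eps]$ with $\varphi_\eps'\equiv 1-\tfrac{2\eps}{T}$, so that for $s\in(-1,1)$ the argument $\varphi_\eps(t)+\eps s$ of $u$ always lies in $J$; this is exactly what lets me run every estimate on $u$ restricted to $J$ without ever needing boundary values, and is the whole reason the standard mollifier is modified. Second, $\int_\R\rho'=0$ (since $\supp\rho=(-1,1)$), which supplies the cancellation needed for the $\eps$-weighted derivatives. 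Smoothness is immediate: in the second representation in \eqref{eq:definition sequence} the $t$-dependence sits entirely in the smooth, compactly supported kernel $\rho\!\left(\tfrac{s-\varphi_\eps(t)}{\eps}\right)$, so differentiation under the integral sign gives $v_\eps\in C^\infty(\overline J;V)$ with all time derivatives obtained by differentiating the kernel.

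For the first bullet I would establish the two components of $\|\cdot\|_{\mathcal X}$ separately, writing $v_\eps-u=(v_\eps-u\circ\varphi_\eps)+(u\circ\varphi_\eps-u)$. Using $\int_\R\rho=1$, the first difference is $\int_{-1}^1\rho(s)\,[u(\varphi_\eps(t)+\eps s)-u(\varphi_\eps(t))]\,ds$; after the substitution $\tau=\varphi_\eps(t)$ and Cauchy--Schwarz in the finite measure $|\rho(s)|\,ds$, this is dominated by the $L_2(J;V)$-modulus of continuity of translates of $u$ (extended by zero to $\R$), which tends to $0$. The second difference $u\circ\varphi_\eps-u\to 0$ in $L_2(J;V)$ is the standard $L_2$-continuity of affine reparametrizations converging to the identity, proved by a change of variables (using $\varphi_\eps'\to 1$) plus a density argument. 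For the time derivative, differentiating the kernel yields
\begin{equation}\label{eq:plan-dtv}
\partial_t v_\eps(t)=-\frac{\varphi_\eps'(t)}{\eps^2}\int_J \rho'\!\left(\tfrac{s-\varphi_\eps(t)}{\eps}\right)u(s)\,ds,
\end{equation}
and integrating by parts in $s$ (boundary terms vanish by compact support) followed by $s=\varphi_\eps(t)+\eps\sigma$ recasts this as $\partial_t v_\eps(t)=\varphi_\eps'(t)\int_{-1}^1\rho(\sigma)\,(\partial_s u)(\varphi_\eps(t)+\eps\sigma)\,d\sigma$. This is the very same mollification--reparametrization operation now applied to $\partial_t u\in L_2(J;V')$, so the argument of the previous step, run in $L_2(J;V')$, gives $\partial_t v_\eps\to\partial_t u$, completing $v_\eps\to u$ in $\mathcal X$.

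For the last two bullets the cancellation $\int_\R\rho'=0$ is decisive. From \eqref{eq:plan-dtv}, the substitution $s=\varphi_\eps(t)+\eps\sigma$ gives $\eps\,\partial_t v_\eps(t)=-\varphi_\eps'(t)\int_{-1}^1\rho'(\sigma)\,u(\varphi_\eps(t)+\eps\sigma)\,d\sigma$; subtracting the constant $u(\varphi_\eps(t))$, which is legitimate precisely because $\int_{-1}^1\rho'=0$, leaves $-\varphi_\eps'(t)\int_{-1}^1\rho'(\sigma)\,[u(\varphi_\eps(t)+\eps\sigma)-u(\varphi_\eps(t))]\,d\sigma$, again controlled by translation continuity of $u$ in $L_2(J;V)$, so $\eps\,\partial_t v_\eps\to 0$ there. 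Differentiating \eqref{eq:plan-dtv} once more (with $\varphi_\eps''\equiv 0$) gives $\eps\,\partial_t^2 v_\eps(t)=\dfrac{(\varphi_\eps'(t))^2}{\eps^2}\int_J \rho''\!\left(\tfrac{s-\varphi_\eps(t)}{\eps}\right)u(s)\,ds$; one integration by parts in $s$ moves a derivative onto $u$ — which is exactly why the target space here is the dual $V'$ — and after $s=\varphi_\eps(t)+\eps\sigma$ produces $-(\varphi_\eps'(t))^2\int_{-1}^1\rho'(\sigma)\,(\partial_s u)(\varphi_\eps(t)+\eps\sigma)\,d\sigma$, to which the identical $\int\rho'=0$ and translation-continuity argument (now for $\partial_t u\in L_2(J;V')$) applies, giving $\eps\,\partial_t^2 v_\eps\to 0$.

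The main obstacle is the last bullet: dimensionally $\partial_t^2 v_\eps$ scales like $\eps^{-2}$, so $\eps\,\partial_t^2 v_\eps$ is a priori of size $\eps^{-1}$ and convergence to zero is far from automatic. The resolution is to combine a single integration by parts, which simultaneously recovers one factor of $\eps$ and lands the expression in the natural dual space $V'$, with the cancellation $\int_\R\rho'=0$ that converts the remaining $\eps^{-1}$-scaled quantity into a difference quotient governed by the $L_2$-modulus of continuity of $\partial_t u$. Once this mechanism is in place, all three statements reduce to the continuity of translations on $L_2(J;V)$ and $L_2(J;V')$ together with $\varphi_\eps'\to 1$.
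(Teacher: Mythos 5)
Your proof is correct and follows essentially the same route as the paper's: both express $\eps\partial_t v_\eps$ and $\eps\partial_t^2 v_\eps$ by moving one derivative from the kernel onto $u$, exploit $\int_\R\rho'=0$ to subtract a constant value of $u$ (resp.\ $\partial_t u$), and thereby reduce all three claims to continuity of translations in $L_2(J;V)$ and $L_2(J;V')$, which is exactly the mechanism that makes the $\eps$-weighted second-derivative bound work despite its apparent $\eps^{-1}$ scaling. The only difference is that you argue the first bullet and the translation-continuity estimates from scratch, where the paper simply cites \cite[Thm.~64.36]{70.99} and reuses its argument.
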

\begin{proof}
It is already established in \cite[Thm. 64.36]{70.99} that $v_\eps\to u$ in $\mathcal{X}$. For simplicity we assume that $T=1$. For fixed $\eps$ and $t$, it holds that 
\begin{align*}
    \eps \partial_t v_\eps(t) & = \eps(1-2\eps)\int_{-1}^1\rho(s)\partial_t u(\varphi_\eps(t)+\eps s)ds\\
    & = (1-2\eps)\int_{-1}^1 \rho'(s)u(\varphi_\eps (t)+\eps s)ds\\
    & = -(1-2\eps)\int_{-1}^1 \rho'(s) (u(t)-u(\varphi_\eps(t)+\eps s))ds.
\end{align*}
Hence, we can bound 
\begin{align}\label{eq:bounding argument}
    \|\eps\partial_t v_\eps(t)\|_{V}\lesssim \int_{-1}^1\|u(t)-u(\varphi_\eps(t) + \eps s)\|_{V}ds.
\end{align}
Following the proof from \cite[Theorem~64.36]{70.99}, we conclude that $\eps \partial_t v_\eps\to 0$ in $L_2(J;V)$.
Finally, 
\begin{align}\label{eq:eps dt2 veps}
    \eps \partial^2_t v_\eps(t) = -(1-2\eps)^2\int_{-1}^1\rho'(s)(\partial_t u(t)-\partial_t u(\varphi_\eps(t)+\eps s))ds,
\end{align}
and we can reason as above to conclude that $\eps \partial^2_t v_\eps\to 0$ in $L_2(J;V')$.
\end{proof}
\new{We are now ready to establish an important result for functions $u\in\mathcal{X}$ satisfying $Bu=~0$. Later, in Lemma~\ref{lemma:regularity positive t}, we will use this result to bound the $L_2(J;V')$-norm of $B\partial_t v_\eps$ with $\|u\|_{L_2(J;V)}$, which is a key estimate that leads to the conclusion that $\partial_t u(t)\in H$ for positive values of $t$.}
\begin{lemma}\label{lemma: Bv_eps}
      Let $u\in \mathcal{X}$ satisfy $Bu=0$. Then, for the sequence $v_\eps$ defined in \eqref{eq:definition sequence}, we have 
        \begin{align}\label{eq:Bveps 2}
           (B(t)\partial_t v_\eps)(\zeta)=\langle\frac{-2\eps}{1-2\eps}\partial_t^2v_\eps(t), \zeta(t)\rangle_H + \langle I(t),\zeta(t)\rangle_H,
       \end{align}
       where $I(t)$ satisfies $\|I\|_{L_2(J;V')}\lesssim \|u\|_{L_2(J;V)}$.
\end{lemma}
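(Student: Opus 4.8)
The plan is to turn \eqref{eq:Bveps 2} into a pointwise-in-$t$ identity in $V'$ and to read off $I(t)$ as the single term that survives an exact algebraic cancellation driven by the hypothesis $Bu=0$. Abbreviate $\psi_\eps(t,\sigma):=\tfrac1\eps\rho\!\left(\tfrac{\sigma-\varphi_\eps(t)}{\eps}\right)$, so that (taking $T=1$ as in Lemma~\ref{lemma:dense sequence}) $v_\eps(t)=\int_J\psi_\eps(t,\sigma)u(\sigma)\,d\sigma$. The two facts that power the argument are the kernel identities $\partial_t\psi_\eps=-(1-2\eps)\partial_\sigma\psi_\eps$ and $\partial_t^2\psi_\eps=(1-2\eps)^2\partial_\sigma^2\psi_\eps$ (both because $\varphi_\eps'\equiv 1-2\eps$ is constant), together with the fact that $\psi_\eps(t,\cdot)$ and its $\sigma$-derivatives are compactly supported inside $J$ — which is precisely what the shift in $\varphi_\eps$ is designed to ensure. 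Differentiating under the integral then gives the representations $\partial_t v_\eps(t)=-(1-2\eps)\int_J\partial_\sigma\psi_\eps(t,\sigma)\,u(\sigma)\,d\sigma$ and, after one integration by parts in $\sigma$, $\partial_t^2 v_\eps(t)=-(1-2\eps)^2\int_J\partial_\sigma\psi_\eps(t,\sigma)\,\partial_\sigma u(\sigma)\,d\sigma$, which is the same computation that produces \eqref{eq:eps dt2 veps}.

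Expanding the definition of $B(t)$ gives $(B(t)\partial_t v_\eps)(\zeta)=\langle\partial_t^2 v_\eps(t),\zeta\rangle_H+\langle A(t)\partial_t v_\eps(t),\zeta\rangle_H$, so everything hinges on rewriting $A(t)\partial_t v_\eps(t)$. Here I would use the first representation above: since its integrand is $V$-valued (the scalar $\partial_\sigma\psi_\eps$ times $u(\sigma)\in V$), the bounded operator $A(t)\in\mathcal{L}(V,V')$ may be pulled inside, yielding $A(t)\partial_t v_\eps(t)=-(1-2\eps)\int_J\partial_\sigma\psi_\eps(t,\sigma)\,A(t)u(\sigma)\,d\sigma$. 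This is the step I expect to be the main obstacle: it is essential to keep $u$ (not $\partial_t u$) inside the integral, because $A(t)$ would otherwise have to act on $\partial_t u(\sigma)\in V'$, where it is undefined.

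I would then split $A(t)u(\sigma)=A(\sigma)u(\sigma)+(A(t)-A(\sigma))u(\sigma)$. For the first piece, the hypothesis $Bu=0$ gives $A(\sigma)u(\sigma)=-\partial_\sigma u(\sigma)$ in $V'$ for a.e.\ $\sigma$, so this piece equals $(1-2\eps)\int_J\partial_\sigma\psi_\eps\,\partial_\sigma u\,d\sigma$, which by the second representation above is exactly $-\tfrac1{1-2\eps}\partial_t^2 v_\eps(t)$. Substituting back, $\langle A(t)\partial_t v_\eps(t),\zeta\rangle_H=-\tfrac1{1-2\eps}\langle\partial_t^2 v_\eps(t),\zeta\rangle_H+\langle I(t),\zeta\rangle_H$ with
\[
I(t):=-(1-2\eps)\int_J\partial_\sigma\psi_\eps(t,\sigma)\,(A(t)-A(\sigma))u(\sigma)\,d\sigma,
\]
and since $1-\tfrac1{1-2\eps}=\tfrac{-2\eps}{1-2\eps}$, the coefficient of $\partial_t^2 v_\eps(t)$ collapses to the constant appearing in \eqref{eq:Bveps 2}.

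It remains to bound $I$, which is where the Lipschitz assumption \eqref{eq:assumption C1} enters. On the support of $\partial_\sigma\psi_\eps(t,\cdot)$ one has $|\sigma-\varphi_\eps(t)|<\eps$ and $|t-\varphi_\eps(t)|=\eps|1-2t|\le\eps$, hence $|t-\sigma|<2\eps$; thus $\|(A(t)-A(\sigma))u(\sigma)\|_{V'}\lesssim\eps\,\|u(\sigma)\|_V$. Combined with $|\partial_\sigma\psi_\eps(t,\sigma)|\lesssim\eps^{-2}$ the powers of $\eps$ cancel and $\|I(t)\|_{V'}\lesssim\int_J\tfrac1\eps\big|\rho'\big(\tfrac{\sigma-\varphi_\eps(t)}{\eps}\big)\big|\,\|u(\sigma)\|_V\,d\sigma$. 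This is an averaging operator whose kernel has $L_1$-mass bounded uniformly in $t$, and (after the substitution $\sigma\mapsto\varphi_\eps(t)$) also uniformly in $\sigma$; a Schur test then yields $\|I\|_{L_2(J;V')}\lesssim\|u\|_{L_2(J;V)}$, which is the asserted estimate.
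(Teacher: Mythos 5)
Your proposal is correct, and its algebraic core coincides with the paper's proof: both exploit that $\varphi_\eps'$ is constant to trade $t$-derivatives of the mollifying kernel for $\sigma$-derivatives, pull the bounded operator $A(t)$ inside the ($V$-valued) integral, split $A(t)u(\sigma)=A(\sigma)u(\sigma)+(A(t)-A(\sigma))u(\sigma)$, and use $Bu=0$ (i.e.\ $A(\sigma)u(\sigma)=-\partial_\sigma u(\sigma)$) so that this piece recombines with $\partial_t^2 v_\eps$ to produce the coefficient $1-\tfrac{1}{1-2\eps}=\tfrac{-2\eps}{1-2\eps}$, leaving exactly the commutator term $I$. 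The only genuine difference is the final estimate of $I$: the paper writes $I=I_1+I_2$, bounds $\|I_2(t)\|_{V'}\lesssim\|u(t)\|_V$ pointwise, and disposes of $I_1$ by the translation-continuity argument recycled from Lemma~\ref{lemma:dense sequence} (whence its ``for $\eps$ small enough''), whereas you bound the whole of $I$ at once by the averaging operator with kernel $\tfrac1\eps\bigl|\rho'\bigl(\tfrac{\sigma-\varphi_\eps(t)}{\eps}\bigr)\bigr|$ and invoke a Schur test, the Lipschitz factor $|t-\sigma|\lesssim\eps$ cancelling the extra $\eps^{-1}$. Your route is slightly cleaner: it yields a single quantitative bound, uniform in $\eps$ (up to requiring, say, $\eps\le\tfrac14$ so that $1-2\eps\gtrsim1$), without appealing to a convergence statement, and it also keeps all signs consistent, which the paper's intermediate displays do not.
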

\begin{proof}
    Thanks to \cite[Cor. 64.14]{70.99} we can interchange the order of duality brackets, applications of operators, and integration. 
    Note that we can write
    \begin{equation}\label{eq:vtt}
    \begin{aligned}
        \partial_t^2 v_\eps(t) &= (1-2\eps)^2\int_{-1}^1 \rho(s)\partial_t^2u(\varphi_\eps(t)+\eps s)ds\\
        &=\frac{(1-2\eps)^2}{\eps}\int_{-1}^1\rho'(s)\partial_t u(\varphi_\eps(t)+\eps s)ds\\
        &=\frac{(1-2\eps)^2}{\eps^2}\int_J\rho'\left(\frac{s-\varphi_\eps(t)}{\eps}\right)\partial_tu(s)ds.
    \end{aligned}
    \end{equation}
    And similarly,
    \begin{equation}\label{eq:A vt}
    \begin{aligned}
        A(t) \partial_t v_\eps(t) &= \frac{(1-2\eps)}{\eps^2}\int_J \rho'\left(\frac{s-\varphi_\eps(t)}{\eps}\right) A(t)u(s)ds\\
        &=\frac{(1-2\eps)}{\eps^2}\int_J\rho'\left(\frac{s-\varphi_\eps(t)}{\eps}\right) (A(t)-A(s))u(s)ds+\\
        &\frac{(1-2\eps)}{\eps^2}\int_J\rho'\left(\frac{s-\varphi_\eps(t)}{\eps}\right) A(s)u(s)ds.
    \end{aligned}
    \end{equation}
    Adding \eqref{eq:vtt} and \eqref{eq:A vt} and using $B(s)u(s)=0$ yields
    \begin{equation}
    \begin{aligned}
        (B(t)\partial_t v_\eps(t))(\zeta) = \langle \frac{-2\eps}{1-2\eps}\partial_t^2v_\eps(t),\zeta(t)\rangle_H + \\\langle \frac{(1-2\eps)}{\eps^2}\int_J\rho'\left(\frac{s-\varphi_\eps(t)}{\eps}\right) (A(t)-A(s))u(s)ds, \zeta(t)\rangle_H.
    \end{aligned}
    \end{equation}
    We will now focus on bounding $I(t):=\frac{1}{\eps^2}\int_J\rho'\left(\frac{s-\varphi_\eps(t)}{\eps}\right) (A(t)-A(s))u(s)ds$ in the $L_2(J;V')$-norm. We have
    \begin{equation}
    \begin{aligned}
        I(t)&=\frac{1}{\eps}\int_{-1}^1\rho'(s) \left(A(t)-A(\varphi_\eps(t)+\eps s)\right)u(\varphi_\eps(t) + \eps s)ds\\
        &=\frac{1}{\eps}\int_{-1}^1\rho'(s) \left(A(t)-A(\varphi_\eps(t)+\eps s)\right)(u(\varphi_\eps(t) + \eps s) - u(t))ds+\\
        &\frac{1}{\eps}\int_{-1}^1\rho'(s) \left(A(t)-A(\varphi_\eps(t)+\eps s)\right)u(t)ds\\
        &=:I_1(t)+I_2(t).
    \end{aligned}
    \end{equation}
    \new{Thanks to \eqref{eq:assumption C1}, we can then bound
    \begin{align}
        \|I_1(t)\|_{V'}\lesssim \frac{1}{\eps}\int_{-1}^1 \eps \|u(\varphi_\eps(t) + \eps s) - u(t)\|_Vds,
    \end{align}
    which is the same upper bound as in \eqref{eq:bounding argument}. This allows us to conclude that $I_1\to 0$ in $L_2(J;V')$.}

    Again, thanks to \eqref{eq:assumption C1}, can bound $\|I_2(t)\|_{V'}$ as
    \begin{equation}
    \begin{aligned}
    \|I_2(t)\|_{V'}\lesssim \frac{1}{\eps}\int_{-1}^1\eps\|u(t)\|_Vds\lesssim \|u(t)\|_V.
    \end{aligned}
    \end{equation}
    We conclude that $\|I\|_{L_2(J;V')}\lesssim \|u\|_{L_2(J;V)}$ for $\eps>0$ small enough.   
\end{proof}
\subsection{Regularity estimate for $t>0$}
The following lemma appears in \cite{145}, but our proof applies to the abstract setting introduced above and assumes only Lipschitz continuity of $A(t)$.
\begin{lemma}[Regularity estimate for $t>0$]\label{lemma:regularity positive t}
    Let $u\in \mathcal{X}$ satisfy $Bu=0$, then for any $t\in (0,T]$ there holds
    \begin{align*}
        \|A(t)u(t)\|_H=\|\partial_tu(t)\|_{H} \lesssim \frac1t\|u(0)\|_{H}.
    \end{align*}
    Furthermore, for any $t_0>0$, $\partial_tu\in C([t_0,T];H)$.
\end{lemma}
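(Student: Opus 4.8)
The plan is to prove the pointwise bound $\|A(t)u(t)\|_H = \|\partial_t u(t)\|_H \lesssim \tfrac1t\|u(0)\|_H$ by a ``multiply by $t$ and differentiate'' energy argument, carried out first on the smooth approximants $v_\eps$ from Lemma~\ref{lemma:dense sequence} and then passed to the limit. The identity $\|A(t)u(t)\|_H = \|\partial_t u(t)\|_H$ is immediate from $B(t)u(t)=0$, which gives $A(t)u(t) = -\partial_t u(t)$ in $V'$; the real content is that this element lies in $H$ and obeys the $1/t$ bound. Since for $v_\eps$ we do not have $Bv_\eps=0$ exactly, the strategy is to work with $w_\eps := \partial_t v_\eps$, test the equation for $w_\eps$ against $t\,w_\eps$ (or against $\partial_t v_\eps$ weighted by $t$), and exploit Lemma~\ref{lemma: Bv_eps}, which controls $B(t)\partial_t v_\eps$ up to the term $\tfrac{-2\eps}{1-2\eps}\partial_t^2 v_\eps$ (which vanishes in $L_2(J;V')$ by Lemma~\ref{lemma:dense sequence}) plus $I(t)$ with $\|I\|_{L_2(J;V')}\lesssim\|u\|_{L_2(J;V)}$.

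Concretely, first I would establish the a priori estimate at the level of $v_\eps$. Using coercivity up to shift, $\Re a(t;\eta,\eta)+\lambda\|\eta\|_H^2\ge\alpha\|\eta\|_V^2$, I would compute $\tfrac{d}{dt}\big(t\,\|\partial_t v_\eps(t)\|_H^2\big)$ or, more robustly, differentiate the weighted energy $t\mapsto t\,\Re a(t;\partial_t v_\eps, \partial_t v_\eps)$-type quantity. Testing $(B(t)\partial_t v_\eps)(\partial_t v_\eps)$ and taking real parts produces $\langle \partial_t^2 v_\eps,\partial_t v_\eps\rangle_H = \tfrac12\tfrac{d}{dt}\|\partial_t v_\eps\|_H^2$ plus the coercive term $\Re a(t;\partial_t v_\eps,\partial_t v_\eps)$; the Lipschitz assumption~\eqref{eq:assumption C1} lets me control the $\tfrac{d}{dt}a$ contribution when I differentiate the form itself. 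Multiplying by the weight $t$, integrating over $(0,t)$, and absorbing the coercive term against the $\lambda\|\cdot\|_H^2$ shift (via a Gr\"onwall step) should yield $t\|\partial_t v_\eps(t)\|_H^2 \lesssim \int_0^t \|\partial_t v_\eps\|_V^2\,ds + \|I\|_{L_2(J;V')}^2 + (\text{terms}\to 0)$, and then I would bound $\int_0^t\|\partial_t v_\eps\|_V^2$ and $\|I\|_{L_2}^2$ by $\|u\|_{L_2(J;V)}^2$. The final step is to convert the right-hand side into $\|u(0)\|_H^2$: on the homogeneous solution the forward-problem isomorphism $G$ gives $\|u\|_{\mathcal X}\eqsim\|u(0)\|_H$ (since $Bu=0$), so $\|u\|_{L_2(J;V)}\lesssim\|u(0)\|_H$, closing the estimate.

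The passage to the limit $\eps\to0$ is where care is needed: Lemma~\ref{lemma:dense sequence} gives $v_\eps\to u$ in $\mathcal X$, $\eps\partial_t v_\eps\to0$ in $L_2(J;V)$, and $\eps\partial_t^2 v_\eps\to0$ in $L_2(J;V')$, while Lemma~\ref{lemma: Bv_eps} controls $I$ uniformly. Since the weighted energy bound is uniform in $\eps$ on every interval $[t_0,T]$, I would extract a weak-$*$ limit showing $\partial_t u(t)\in H$ with the claimed bound for each fixed $t>0$; the $\tfrac{-2\eps}{1-2\eps}\partial_t^2 v_\eps$ term is harmless precisely because Lemma~\ref{lemma:dense sequence} provides $\eps\partial_t^2 v_\eps\to0$. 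For the continuity assertion $\partial_t u\in C([t_0,T];H)$, I would note that on $[t_0,T]$ the regularized problem for $\partial_t v_\eps$ is itself a well-posed parabolic problem with $H$-valued data and right-hand side controlled in $L_2(J;V')$, so the embedding $\mathcal X\hookrightarrow C(\overline J;H)$ applied on $[t_0,T]$ to $\partial_t u$ (now known to be in the corresponding $\mathcal X$-type space away from $t=0$) yields continuity into $H$.

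The main obstacle I anticipate is making the weighted energy identity rigorous: differentiating $\|\partial_t v_\eps(t)\|_H^2$ and the form $a(t;\cdot,\cdot)$ requires enough regularity that is only available for the smooth approximants, and the extra term $\tfrac{-2\eps}{1-2\eps}\partial_t^2 v_\eps(t)$ appearing in~\eqref{eq:Bveps 2} must be shown not to spoil the uniform-in-$\eps$ bound. Because this term carries a factor $\eps$ only through $\eps\partial_t^2 v_\eps$, which tends to $0$ in $L_2(J;V')$, the expectation is that when tested against the weighted $\partial_t v_\eps$ it contributes a vanishing quantity; verifying this interchange of the weight $t$, the duality pairing, and the limit---together with the Gr\"onwall absorption of the shift constant $\lambda$ on the whole interval down to $t=0$---is the delicate part of the argument.
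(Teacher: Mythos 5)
Your reduction steps are fine (the identity $A(t)u(t)=-\partial_t u(t)$ in $V'$, and $\|u\|_{L_2(J;V)}\lesssim\|u(0)\|_H$ from $Bu=0$ and the isomorphism $G$), but the central weighted energy estimate fails as written, for three reasons. First, testing the equation for $w_\eps:=\partial_t v_\eps$ (from Lemma~\ref{lemma: Bv_eps}) against $s\,w_\eps$ and integrating by parts in time gives
\begin{align*}
\tfrac t2\|w_\eps(t)\|_H^2+\int_0^t s\,\Re a(s;w_\eps,w_\eps)\,ds
=\tfrac12\int_0^t\|w_\eps\|_H^2\,ds+\int_0^t s\,\Re\langle \mathrm{err}(s),w_\eps(s)\rangle_H\,ds,
\end{align*}
where $\mathrm{err}$ is the right-hand side of \eqref{eq:Bveps 2}. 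Coercivity puts $\int_0^t s\|w_\eps\|_V^2\,ds$ on the \emph{left} (where it absorbs the error pairing), so what survives on the right is the \emph{unweighted} term $\int_0^t\|\partial_t v_\eps\|_H^2\,ds$. This quantity is not uniformly bounded in $\eps$: its formal limit $\int_0^t\|\partial_t u\|_H^2\,ds$ is infinite for general $u(0)\in H$ (for the heat semigroup it is finite only when $u(0)\in V$), which is exactly why the classical smoothing argument needs a hierarchy of weights rather than one. Second, your proposed bound $\int_0^t\|\partial_t v_\eps\|_V^2\,ds\lesssim\|u\|_{L_2(J;V)}^2$ is false: mollification costs a factor $\eps^{-1}$, i.e.\ $\|\partial_t v_\eps\|_{L_2(J;V)}\lesssim\eps^{-1}\|u\|_{L_2(J;V)}$ and nothing better (Lemma~\ref{lemma:dense sequence} only asserts $\eps\partial_t v_\eps\to0$ in $L_2(J;V)$), and the limit $\|\partial_t u\|_{L_2(J;V)}$ is generically infinite. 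Third, even if you had $t\|\partial_t u(t)\|_H^2\lesssim\|u(0)\|_H^2$, that yields only $\|\partial_t u(t)\|_H\lesssim t^{-1/2}\|u(0)\|_H$; the claimed $1/t$ rate requires the weight $t^2$.

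The energy route can be repaired, but it becomes the full three-tier parabolic smoothing argument: (i) test the equation for $v_\eps$ with $v_\eps$ to get $\int_0^T\|v_\eps\|_V^2\,ds\lesssim\|u(0)\|_H^2$; (ii) test it with $s\,\partial_t v_\eps$, using \eqref{eq:assumption C1} to control $\tfrac{d}{ds}a(s;\cdot,\cdot)$, to get $\int_0^T s\|\partial_t v_\eps\|_H^2\,ds\lesssim\|u(0)\|_H^2$; (iii) only then test the differentiated equation with $s^2\partial_t v_\eps$, so the term produced by differentiating the weight, $\int_0^t 2s\|\partial_t v_\eps\|_H^2\,ds$, is precisely the quantity controlled in (ii), giving $t^2\|\partial_t v_\eps(t)\|_H^2\lesssim\|u(0)\|_H^2$ uniformly in $\eps$. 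Note that the paper avoids energy identities altogether: it multiplies $\partial_t v_\eps$ by a smooth cutoff $\chi$ vanishing near $t=0$, observes $\gamma_0(\chi\partial_t v_\eps)=0$, and applies the forward well-posedness $(B,\gamma_0)\in\Lis(\mathcal X,\mathcal Y'\times H)$ together with $\mathcal X\hookrightarrow C(\overline J;H)$, Lemma~\ref{lemma:dense sequence} and Lemma~\ref{lemma: Bv_eps}; the same argument shows $\partial_t v_\eps$ is Cauchy in $C([t_0,T];H)$, which delivers the continuity statement that your sketch leaves essentially unproved.
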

\begin{proof}
    Pick any $t_0>0$. Choose $\chi\in C^\infty(\R)$ with $0\leq \chi\leq 1$ and 
    \begin{equation*}
    \chi(t) = \left\{\begin{array}{rl}
0,& t\leq t_0/2,\\
1,& t\geq t_0,
\end{array}
\right.
    \end{equation*}
such that $|\chi'(t)|\lesssim \frac1{t_0}$. Let $v_\eps$ be as in Lemma \ref{lemma:dense sequence} and $I$ be as in Lemma \ref{lemma: Bv_eps}. We have $(B\chi \partial_t v_\eps)(\zeta)=\int_J\langle\chi'\partial_t v_\eps,\zeta\rangle_H + (B\partial_t v_\eps)(\chi\zeta)$ and $\gamma_0 (\chi\partial_tv_\eps)=0$. Hence, for any $t\in[t_0,T]$, using $\mathcal{X}\hookrightarrow C(\overline{J};H)$ and $(B,\gamma_0)\in \Lis(\mathcal{X}, \mathcal{Y}'\times L_2(\Omega))$, we obtain
\begin{align*}
    \|\partial_t v_\eps(t)\|_H&\lesssim \|B\chi \partial_t v_\eps\|_{\mathcal{Y}'}\\
    &\lesssim \|\chi'\partial_t v_\eps\|_{\mathcal{Y}'} + \|B\partial_tv_\eps\|_{\mathcal{Y}'}\\
    &\lesssim \frac1{t_0}\|v_\eps\|_{\mathcal{X}} + \|I\|_{\mathcal{Y}'}+\|\frac{-2\eps}{1-2\eps}\partial_t^2v_\eps\|_{\mathcal{Y}'}\\
    &\lesssim \frac1{t_0}(\|v_\eps(0)\|_H + \|Bv_\eps\|_{\mathcal{Y}'}) + \|u(0)\|_H +\|\frac{-2\eps}{1-2\eps}\partial_t^2v_\eps\|_{\mathcal{Y}'}.
\end{align*}

We now follow the proof of \cite[Ch. XVIII, $\S$1, Th. 1]{63}. Using Lemma~\ref{lemma:dense sequence}, and~\eqref{eq:definition X and Y}, in the limit, the right-hand side is equal to $(\frac{1}{t_0} + 1)\|u(0)\|_{H}$. Therefore, $\partial_t v_\eps$ is a Cauchy sequence in $C([t_0,T];H)$, and since it converges in $\mathcal{Y}'$, its limit must be $\partial_tu$. We conclude that $\partial_t u\in C([t_0,T];H)$ and that $\|\partial_t u(t)\|_{H}\lesssim \frac1{t_0}\|u(0)\|_{H}$ for any $t\in [t_0, T]$.
Since $t_0>0$ was chosen arbitrarily, we obtain the first statement of our lemma.
\end{proof}

\subsection{Case of Sobolev spaces}
In the concrete setting where $H=L_2(\Omega)$ and $V=H_0^1(\Omega)$, we can derive conditional stability estimates in Sobolev spaces with increased smoothness index. However, these estimates depend on elliptic regularity properties, which do not hold in general. The following definition is used to describe the assumptions under which these estimates hold.

\begin{definition}
    Let $\Omega\subset \R^d$ be a bounded Lipschitz domain, and let $A(t)\in \mathcal{L}(V,V')$, $t\in J$. The pair $(\Omega, A)$ is called $(1+\eps)$-regular if $\|u\|_{H^{1+\eps}(\Omega)}\lesssim \|A(t)u\|_{L_2(\Omega)}$ for $u\in H_0^1(\Omega)$.
\end{definition}

\begin{lemma}\label{lemma: regularity space}
    Let $u\in \mathcal{X}$ satisfy $Bu=0$.
    If $H=L_2(\Omega)$ and $V=H_0^1(\Omega)$ and $(\Omega,A)$ is $(1+\eps)$-regular with $\eps>0$, then
    \begin{align*}
        \|u(t)\|_{H^{1+\eps}(\Omega)}\lesssim \frac1t\|u(0)\|_{L_2(\Omega)}.
    \end{align*}
\end{lemma}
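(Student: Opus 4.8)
The plan is to combine the abstract regularity estimate from Lemma~\ref{lemma:regularity positive t} with the $(1+\eps)$-regularity assumption on the pair $(\Omega, A)$. The key observation is that Lemma~\ref{lemma:regularity positive t} already provides control of $\|A(t)u(t)\|_{L_2(\Omega)}$ in terms of $\frac1t\|u(0)\|_{L_2(\Omega)}$, while the definition of $(1+\eps)$-regularity converts such a bound on $\|A(t)v\|_{L_2(\Omega)}$ into a bound on $\|v\|_{H^{1+\eps}(\Omega)}$ for $v\in H_0^1(\Omega)$. Chaining these two facts should immediately yield the claim.

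Concretely, I would first note that for $u\in\mathcal{X}$ satisfying $Bu=0$, Lemma~\ref{lemma:regularity positive t} gives $\partial_t u(t)=A(t)u(t)\in H=L_2(\Omega)$ for every $t\in(0,T]$, together with the quantitative bound $\|A(t)u(t)\|_{L_2(\Omega)}\lesssim \frac1t\|u(0)\|_{L_2(\Omega)}$. I would then observe that, since $u\in\mathcal{X}=L_2(J;H_0^1(\Omega))\cap H^1(J;V')$, we have $u(t)\in V=H_0^1(\Omega)$ for a.e.\ $t$, so that $u(t)$ is an admissible argument in the $(1+\eps)$-regularity estimate. Applying the definition of $(1+\eps)$-regularity to $v=u(t)$ yields
\begin{align*}
    \|u(t)\|_{H^{1+\eps}(\Omega)}\lesssim \|A(t)u(t)\|_{L_2(\Omega)}.
\end{align*}
Combining this with the bound from Lemma~\ref{lemma:regularity positive t} gives $\|u(t)\|_{H^{1+\eps}(\Omega)}\lesssim \frac1t\|u(0)\|_{L_2(\Omega)}$, which is exactly the assertion.

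I do not anticipate a serious obstacle, as the statement is essentially a composition of two already-established facts. The one point requiring minor care is the regularity of $u(t)$ as a function with values in $H_0^1(\Omega)$: one must ensure $u(t)\in H_0^1(\Omega)$ so that the $(1+\eps)$-regularity hypothesis applies pointwise in $t$. This is built into the definition of $\mathcal{X}$, but since Lemma~\ref{lemma:regularity positive t} only asserts continuity of $\partial_t u$ into $H$ on compact subintervals $[t_0,T]$, I would either argue for a.e.\ $t$ (which suffices for the stated estimate) or invoke the continuity $\mathcal{X}\hookrightarrow C(\overline{J};H)$ to make sense of the estimate for every $t\in(0,T]$. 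The essential arithmetic—the transitivity of the two $\lesssim$ bounds—is entirely routine.
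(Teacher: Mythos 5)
Your proposal is correct and follows essentially the same route as the paper: both chain the bound $\|A(t)u(t)\|_{L_2(\Omega)}=\|\partial_t u(t)\|_{L_2(\Omega)}\lesssim \frac1t\|u(0)\|_{L_2(\Omega)}$ from Lemma~\ref{lemma:regularity positive t} with the $(1+\eps)$-regularity estimate $\|u(t)\|_{H^{1+\eps}(\Omega)}\lesssim\|A(t)u(t)\|_{L_2(\Omega)}$. Your additional remarks on the pointwise-in-$t$ admissibility of $u(t)\in H_0^1(\Omega)$ are a harmless refinement of the same argument.
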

\begin{proof}
    Since $Au(t) = \partial_t u(t)\in L_2(\Omega)$, using the $(1+\eps)$-regularity of $\Omega$ and Lemma \ref{lemma:regularity positive t}, we obtain
$$\|u(t)\|_{H^{1+\eps}(\Omega)}\lesssim\|A(t) u(t)\|_{L_2(\Omega)}\lesssim \frac1t\|u(0)\|_{L_2(\Omega)}.$$

\end{proof}
Using interpolation we obtain \new{our new} conditional stability estimates, that are summarized in the next lemma. \new{
\begin{lemma}\label{lemma: higher smoothness cond. stab.}
    Assume that $H=L_2(\Omega)$ and $V=H_0^1(\Omega)$, with $(\Omega, A)$ being $(1+\eps)$-regular. Let $u\in \mathcal{X}$ satisfy $Bu=0$. We have the following estimates:
    \begin{itemize}
        \item For $\beta\in [0,1+\eps]$, we have the bound 
        $$\|u(t)\|_{H^\beta(\Omega)}\lesssim t^{-\frac\beta{1+\eps}}\|u(0)||_{L_2(\Omega)}\left(\frac{\|u(T)\|_{L_2(\Omega)}}{\|u(0)\|_{L_2(\Omega)}}\right)^{(1-\frac{\beta}{1+\eps})\omega(t)},
        $$ 
        for almost every $t\in (0,T]$.
        \item For $\beta\in[0,\frac{1+\eps}{2})$, with $M:=\max\{\|u(0)\|_{L_2(\Omega)},\|u(T)\|_{L_2(\Omega)}+1\}$, we have the bound
        $$
        \|u\|_{L_2(J;H^\beta(\Omega))}^2\lesssim M^2\int_0^T t^{-2\frac\beta{1+\eps}} \left(\frac{\|u(T)\|_{L_2(\Omega)}}{M} \right)^{(2-2\frac{\beta}{1+\eps})\omega(t)}dt.
        $$
        In particular, $\|u\|_{L_2(J;H^\beta(\Omega))}\to 0$, as $\|u(T)\|_{L_2(\Omega)}\to 0$, provided that $\|u(0)\|_{L_2(\Omega)}$ remains bounded.
    \end{itemize}
\end{lemma}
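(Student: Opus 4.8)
The plan is to obtain the pointwise-in-time bound by interpolating the $L_2(\Omega)$-conditional stability estimate \eqref{eq:assumption conditional stability} against the $H^{1+\eps}(\Omega)$-regularity bound of Lemma~\ref{lemma: regularity space}, and then to integrate the resulting expression over $J$ to get the space-time bound. For the first item I set $\theta := \frac{\beta}{1+\eps} \in [0,1]$, so that $H^\beta(\Omega)=[L_2(\Omega),H^{1+\eps}(\Omega)]_\theta$ up to norm equivalence, and invoke the standard interpolation inequality
\[
\|u(t)\|_{H^\beta(\Omega)} \lesssim \|u(t)\|_{L_2(\Omega)}^{1-\theta}\,\|u(t)\|_{H^{1+\eps}(\Omega)}^{\theta},
\]
valid for a.e. $t\in(0,T]$ (the identity $A(t)u(t)=-\partial_t u(t)$ behind Lemma~\ref{lemma: regularity space} holds only a.e., which is the source of the ``almost every'' in the statement). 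Into the first factor I substitute \eqref{eq:assumption conditional stability} with $H=L_2(\Omega)$, and into the second the bound $\|u(t)\|_{H^{1+\eps}(\Omega)}\lesssim t^{-1}\|u(0)\|_{L_2(\Omega)}$. Collecting the powers of $\|u(0)\|_{L_2(\Omega)}$ gives the exponent $(1-\omega(t))(1-\theta)+\theta = 1-(1-\theta)\omega(t)$, while $\|u(T)\|_{L_2(\Omega)}$ acquires exponent $(1-\theta)\omega(t)$ and $t$ exponent $-\theta$; factoring out $\|u(0)\|_{L_2(\Omega)}$ and recalling $1-\theta = 1-\frac{\beta}{1+\eps}$ yields exactly the first estimate. (If $\|u(0)\|_{L_2(\Omega)}=0$, the isomorphism property of $G$ forces $u\equiv 0$ and everything is trivial, so I may assume $\|u(0)\|_{L_2(\Omega)}>0$.)

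For the second item I square the first estimate and integrate. The only manipulation is to replace $\|u(0)\|_{L_2(\Omega)}$ by $M$: since $0<\|u(0)\|_{L_2(\Omega)}\le M$ and the exponent $2\big(1-(1-\theta)\omega(t)\big)$ is nonnegative (both $\omega(t)\le 1$ and $1-\theta\le 1$), monotonicity of $x\mapsto x^p$ gives $\|u(0)\|_{L_2(\Omega)}^{2(1-(1-\theta)\omega(t))}\le M^{2(1-(1-\theta)\omega(t))}$. Rewriting $M^{2(1-(1-\theta)\omega(t))}\|u(T)\|_{L_2(\Omega)}^{2(1-\theta)\omega(t)} = M^2\big(\|u(T)\|_{L_2(\Omega)}/M\big)^{(2-2\theta)\omega(t)}$ and integrating over $J$ delivers the claimed space-time bound.

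For the concluding limit, I observe that $M\ge \|u(T)\|_{L_2(\Omega)}+1$ and $M\ge 1$, so $\|u(T)\|_{L_2(\Omega)}/M\in[0,1)$, whence the integrand is dominated by $t^{-2\theta}$. The hypothesis $\beta<\frac{1+\eps}{2}$ is precisely what makes $2\theta<1$, so $\int_0^T t^{-2\theta}\,dt<\infty$ and the bound is finite. To send $\|u(T)\|_{L_2(\Omega)}\to 0$ with $\|u(0)\|_{L_2(\Omega)}$ bounded, I apply dominated convergence: $M$ stays bounded (and $\ge 1$), so $\|u(T)\|_{L_2(\Omega)}/M\to 0$; since $\omega$ is strictly increasing with $\omega(0)=0$ we have $\omega(t)>0$ for every $t\in(0,T]$, so the integrand tends to $0$ pointwise a.e. while remaining dominated by the integrable $t^{-2\theta}$. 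Hence the integral vanishes in the limit, and multiplying by the bounded factor $M^2$ gives $\|u\|_{L_2(J;H^\beta(\Omega))}\to 0$.

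The argument is essentially routine interpolation followed by integration, so there is no serious obstacle; the only genuinely delicate points are the bookkeeping around $M$ (checking that the exponent multiplying $\|u(0)\|_{L_2(\Omega)}$ stays nonnegative so the replacement by $M$ is legitimate) and recognizing that $\beta<\frac{1+\eps}{2}$ is exactly the integrability threshold at $t=0$ needed both for finiteness and for the dominated-convergence step.
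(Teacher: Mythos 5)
Your proposal is correct and follows essentially the same route as the paper's own proof: interpolation between $L_2(\Omega)$ and $H^{1+\eps}(\Omega)$ via Lemma~\ref{lemma: regularity space} and \eqref{eq:assumption conditional stability}, then squaring and integrating with $\|u(0)\|_{L_2(\Omega)}$ replaced by $M$, and finally dominated convergence using $\omega(t)>0$ and $2\frac{\beta}{1+\eps}<1$. The extra care you take (the trivial case $\|u(0)\|_{L_2(\Omega)}=0$, the nonnegativity of the exponent when passing to $M$) only makes explicit what the paper leaves implicit.
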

\begin{proof}
    If $(\Omega, A)$ is $(1+\eps)$-regular then we can obtain a conditional stability estimate in the $H^\beta(\Omega)$-norm using an interpolation argument, Lemma \ref{lemma: regularity space}, and by using \eqref{eq:assumption conditional stability}. For $\beta\in [0, \eps)$,
    \begin{equation}
    \begin{aligned}
        \|u(t)\|^2_{H^\beta(\Omega)}&\leq\|u(t)\|^{2\frac{\beta}{1+\eps}}_{H^{1+\eps}(\Omega)}\|u(t)\|_{L_2(\Omega)}^{2-2\frac{\beta}{1+\eps}}\\
        &\lesssim t^{-2\frac\beta{1+\eps}}\|u(0)\|_{L_2(\Omega)}^{2\frac{\beta}{1+\eps}} \|u(0)\|_{L_2(\Omega)}^{(2-2\frac{\beta}{1+\eps})(1-\omega(t))} \|u(T)\|_{L_2(\Omega)}^{(2-2\frac{\beta}{1+\eps})\omega(t)}\\
        &= t^{-2\frac\beta{1+\eps}}\|u(0)||_{L_2(\Omega)}^2\left(\frac{\|u(T)\|_{L_2(\Omega)}}{\|u(0)\|_{L_2(\Omega)}}\right)^{(2-2\frac{\beta}{1+\eps})\omega(t)}.
    \end{aligned}
    \end{equation}

    By integrating this inequality we can find a stability estimate for $\|u\|_{L_2(J;H^\beta(\Omega))}$. For convenience we replace $\|u(0)\|_{L_2(\Omega)}$ by $M:=\max\{\|u(0)\|_{L_2(\Omega)},\|u(T)\|_{L_2(\Omega)}+1\}$, which results in the estimate
    \begin{align}
        \|u\|^2_{L_2(J;H^\beta(\Omega))}
        &\lesssim M^2\int_0^T t^{-2\frac\beta{1+\eps}} \left(\frac{\|u(T)\|_{L_2(\Omega)}}{M} \right)^{(2-2\frac{\beta}{1+\eps})\omega(t)}dt.
    \end{align}
    
    Thanks to the Lebesgue dominated convergence theorem, using $\omega(t)>0$ for $t>0$, and $2\frac{\beta}{1+\eps}<1$, we see that $\|u\|_{L_2(J;H^\beta(\Omega))}\to 0$ as $\|u(T)\|_{L_2(\Omega)}\to 0$, assuming that $\|u(0)\|_{L_2(\Omega)}$ remains bounded.
\end{proof}
\begin{remark}[Estimating the decay rate]\label{remark:decay rate}
    Suppose $\omega(t) \geq C t^\alpha$ on $[0,\delta]$ for some constants $C > 0$, $\alpha > 0$, and $\delta > 0$. Assume that $\|u(0)\|_{L_2(\Omega)}$ is bounded. We can estimate the decay rate of $\|u\|_{L_2(J; H^\beta(\Omega))}$ as $\|u(T)\|_{L_2(\Omega)} \to 0$.
    
    Writing $K = -(2-2\frac{\beta}{1+\beta})C\log\left(\frac{\|u(T)\|_{L_2(\Omega)}}{M}\right) > 0$, we split the upper bound for $\|u\|^2_{L_2(J; H^\beta(\Omega))}$ into two integrals:
    \begin{equation}\label{eq:bound L2Hb splitting}
    \begin{aligned}
        \|u\|^2_{L_2(J; H^\beta(\Omega))} 
        \lesssim M^2 \int_0^\delta t^{-2\frac{\beta}{1+\eps}} e^{-K t^\alpha} \, dt 
        + \\\int_\delta^T t^{-2\frac{\beta}{1+\eps}} \|u(0)\|_{L_2(\Omega)}^{2 - (2-2\frac{\beta}{1+\eps})\omega(t)} \|u(T)\|_{L_2(\Omega)}^{(2-2\frac{\beta}{1+\eps})\omega(t)} \, dt.
    \end{aligned}
    \end{equation}
    
    Since $\|u(0)\|_{L_2(\Omega)}$ is bounded, the second integral can be estimated by a constant multiple of $\delta^{-2\frac{\beta}{1+\eps}} \|u(T)\|_{L_2(\Omega)}^{(2-2\frac{\beta}{1+\eps})\omega(\delta)}$.
    
    To estimate the first integral, set $s = K t^\alpha$. We have
    \begin{align*}
        M^2 \int_0^\delta t^{-2\frac{\beta}{1+\eps}} e^{-K t^\alpha} \, dt 
        &= M^2 \int_0^{K \delta^\alpha} \left( \frac{s}{K} \right)^{-2\frac{\beta}{(1+\eps)\alpha}} e^{-s} \cdot \frac{1}{\alpha} \left( \frac{s}{K} \right)^{1/\alpha - 1} \cdot \frac{1}{K} \, ds \\
        &= M^2 K^{-\frac{1 - 2\frac{\beta}{1+\eps}}{\alpha}} \cdot \frac{1}{\alpha} \int_0^{K \delta^\alpha} s^{\tfrac{1 - 2\frac{\beta}{1+\eps}}{\alpha} - 1} e^{-s} \, ds \\
        &< M^2 K^{-\frac{1 - 2\frac{\beta}{1+\eps}}{\alpha}}\cdot \frac{1}{\alpha} \, \Gamma\left( \frac{1 - 2\frac{\beta}{1+\eps}}{\alpha} \right).
    \end{align*}

    This yields the bound
    \begin{align}
        \|u\|_{L_2(J; H^\beta(\Omega))} 
        \lesssim M K^{-\frac{1 - 2\frac{\beta}{1+\eps}}{2\alpha}} \cdot \sqrt{\frac{1}{\alpha} \,  \Gamma\left( \frac{1 - 2\frac{\beta}{1+\eps}}{\alpha} \right) } 
        + \delta^{-\frac{\beta}{1+\eps}} \|u(T)\|_{L_2(\Omega)}^{(1-\frac{\beta}{1+\eps})\omega(\delta)}.
    \end{align}
    
    We conclude that the decay rate of $\|u\|_{L_2(J; H^\beta(\Omega))}$ as $\|u(T)\|_{L_2(\Omega)} \to 0$, provided $\|u(0)\|_{L_2(\Omega)}$ remains bounded, is governed by
    \begin{align}
        \left( -\log \|u(T)\|_{L_2(\Omega)} \right)^{-\tfrac{1 - 2\frac{\beta}{1+\eps}}{2\alpha}}.
    \end{align}
\end{remark}
}
\subsection{Conditional stability estimates}
We now prove the main theorem of this section. 
\begin{theorem}\label{theorem: conditional stability}
    Let $u\in \mathcal{X}$. For $t\in (0,T)$, it holds that
        \begin{align*}            
        \|u(t)\|_{H}\lesssim(\|u(0)\|_{H}+\|Bu\|_{\mathcal{Y}'})^{1-\omega(t)}(\|u(T)\|_{H} + \|Bu\|_{\mathcal{Y}'})^{\omega(t)}.
        \end{align*}    
    \new{Assume additionally that $H=L_2(\Omega)$ and $V=H_0^1(\Omega)$, with $(\Omega, A)$ being $(1+\eps)$-regular. For $\beta\in [0,\frac{1+\eps}{2})$, with $M:=\min\{\|u(0)\|_{L_2(\Omega)},\|u(T)\|_{L_2(\Omega)}+\|Bu\|_{\mathcal{Y}'} + 1\}$, we have the following conditional stability estimate:
    \begin{align*}
        \|u\|_{L_2(J;H^\beta(\Omega))}\lesssim M\sqrt{\int_0^T t^{-2\frac{\beta}{1+\eps}} \left(\frac{\|u(T)\|_{L_2(\Omega)}+\|Bu\|_{\mathcal{Y}'}}{M} \right)^{(2-2\frac{\beta}{1+\eps})\omega(t)}dt} + \|Bu\|_{\mathcal{Y}'}.
    \end{align*}
    }
\end{theorem}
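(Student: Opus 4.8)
The plan is to reduce the inhomogeneous problem to the homogeneous one by exploiting that $G=(B,\gamma_0)$ is an isomorphism, and then to invoke the homogeneous conditional stability assumption \eqref{eq:assumption conditional stability} together with Lemma~\ref{lemma: higher smoothness cond. stab.}. Concretely, set $f:=Bu\in\mathcal{Y}'$ and let $r\in\mathcal{X}$ be the unique solution of the \emph{forward} problem $Br=f$, $\gamma_0 r=0$; since $G\in\Lis(\mathcal{X},\mathcal{Y}'\times H)$, this yields $\|r\|_{\mathcal{X}}\lesssim\|f\|_{\mathcal{Y}'}=\|Bu\|_{\mathcal{Y}'}$. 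Then $w:=u-r$ satisfies $Bw=0$ and $w(0)=u(0)$, so $w$ is a genuine homogeneous solution to which the earlier results apply. Using $\mathcal{X}\hookrightarrow C(\overline{J};H)$ I record $\|r(t)\|_H\lesssim\|r\|_{\mathcal{X}}\lesssim\|Bu\|_{\mathcal{Y}'}$ for every $t$, and in particular $\|w(T)\|_H\le\|u(T)\|_H+\|r(T)\|_H\lesssim\|u(T)\|_H+\|Bu\|_{\mathcal{Y}'}$.

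For the first estimate I would apply \eqref{eq:assumption conditional stability} to $w$ and re-expand $u=w+r$:
\begin{align*}
\|u(t)\|_H &\le \|w(t)\|_H+\|r(t)\|_H \\
&\lesssim \|w(0)\|_H^{1-\omega(t)}\|w(T)\|_H^{\omega(t)}+\|Bu\|_{\mathcal{Y}'}\\
&\lesssim \|u(0)\|_H^{1-\omega(t)}\bigl(\|u(T)\|_H+\|Bu\|_{\mathcal{Y}'}\bigr)^{\omega(t)}+\|Bu\|_{\mathcal{Y}'}.
\end{align*}
It then remains to absorb the additive $\|Bu\|_{\mathcal{Y}'}$ into the multiplicative form claimed in the theorem. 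This is immediate from monotonicity and $\omega(t)\in[0,1]$: the first term is bounded using $\|u(0)\|_H^{1-\omega(t)}\le(\|u(0)\|_H+\|Bu\|_{\mathcal{Y}'})^{1-\omega(t)}$, while $\|Bu\|_{\mathcal{Y}'}=\|Bu\|_{\mathcal{Y}'}^{1-\omega(t)}\|Bu\|_{\mathcal{Y}'}^{\omega(t)}$ is dominated by the product $(\|u(0)\|_H+\|Bu\|_{\mathcal{Y}'})^{1-\omega(t)}(\|u(T)\|_H+\|Bu\|_{\mathcal{Y}'})^{\omega(t)}$.

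For the second estimate I would split $\|u\|_{L_2(J;H^\beta(\Omega))}\le\|w\|_{L_2(J;H^\beta(\Omega))}+\|r\|_{L_2(J;H^\beta(\Omega))}$. In the regime $\beta<\tfrac{1+\eps}{2}$ one has $H^1_0(\Omega)\hookrightarrow H^\beta(\Omega)$, so the residual obeys $\|r\|_{L_2(J;H^\beta(\Omega))}\lesssim\|r\|_{L_2(J;V)}\le\|r\|_{\mathcal{X}}\lesssim\|Bu\|_{\mathcal{Y}'}$, producing the trailing term in the bound. To the homogeneous part I would apply the second bullet of Lemma~\ref{lemma: higher smoothness cond. stab.}, substitute $w(0)=u(0)$ and $\|w(T)\|_{L_2(\Omega)}\lesssim\|u(T)\|_{L_2(\Omega)}+\|Bu\|_{\mathcal{Y}'}$, and take for $M$ the natural majorant of both $\|u(0)\|_{L_2(\Omega)}$ and $\|u(T)\|_{L_2(\Omega)}+\|Bu\|_{\mathcal{Y}'}+1$ (the analogue of the constant appearing in Lemma~\ref{lemma: higher smoothness cond. stab.}, with the residual folded in).

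The step I expect to be the main obstacle is the bookkeeping around $M$ inside the time-singular integral. One must check that replacing $\|w(0)\|_{L_2(\Omega)}$ by $M$ is a valid upward estimate, which requires the exponent $2-(2-2\tfrac{\beta}{1+\eps})\omega(t)$ to stay nonnegative — guaranteed since $\beta<\tfrac{1+\eps}{2}$ forces $2-2\tfrac{\beta}{1+\eps}>1$ — and that the factor $\bigl(\tfrac{\|u(T)\|_{L_2(\Omega)}+\|Bu\|_{\mathcal{Y}'}}{M}\bigr)^{(2-2\frac{\beta}{1+\eps})\omega(t)}$ stays $\le 1$, so the integrand is dominated by the integrable singularity $t^{-2\beta/(1+\eps)}$, integrable precisely because $2\beta/(1+\eps)<1$. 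This is exactly why $M$ must dominate $\|u(T)\|_{L_2(\Omega)}+\|Bu\|_{\mathcal{Y}'}+1$; I would therefore read the $M$ in the statement as this majorant and verify that all implied constants remain uniform in the data.
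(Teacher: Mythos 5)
Your proposal follows the paper's proof essentially step for step: the paper likewise sets $\hat u := G^{-1}(Bu,0)$ (your $r$), applies the homogeneous stability assumption \eqref{eq:assumption conditional stability} to $\tilde u := u-\hat u$ (your $w$) using $\tilde u(0)=u(0)$ and $\|\tilde u(T)\|_H\lesssim \|u(T)\|_H+\|Bu\|_{\mathcal{Y}'}$, and concludes with the same absorption of the additive $\|Bu\|_{\mathcal{Y}'}$ term, while the second estimate (which the paper dismisses with ``follows analogously'') is filled in by your argument via Lemma~\ref{lemma: higher smoothness cond. stab.}. Your reading of $M$ as the majorant (maximum) is also correct: the ``$\min$'' in the theorem statement is inconsistent with Lemma~\ref{lemma: higher smoothness cond. stab.}, and your monotonicity bookkeeping shows the upward replacement by the maximum is exactly what the estimate requires.
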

\begin{proof}
    Let $f=Bu \in \mathcal{Y'}$ and define $\hat u:=G^{-1} (f, 0)$. Then $\tilde u:=u-\hat u$ satisfies $B\tilde u = 0$, so \eqref{eq:assumption conditional stability} applies to $\tilde u$. By the embedding $\mathcal{X}\hookrightarrow C(J;H)$ and the boundedness of $G^{-1}$, we have
    \begin{align}\label{eq:1}
    \|\hat u(t)\|_{H}\lesssim \|\hat u\|_\mathcal{X}\lesssim \|Bu\|_{\mathcal{Y}'},
    \end{align}
    for any $t\in [0,T]$. Furthermore, since $\hat u = 0$, it follows that
    \begin{align}
    \|\tilde u(0)\|_{H}=\|u(0)\|_{H},
    \end{align}
    and
    \begin{align}\label{eq:2}
    \|\tilde u(T)\|_{H}\leq \|u(T)\|_{H}+\|\hat u(T)\|_{H}\lesssim \|u(T)\|_{H}+\|Bu\|_{\mathcal{Y'}}.
    \end{align}
    We can now easily obtain our conditional stability estimates. Using the triangle inequality, \eqref{eq:assumption conditional stability}, \eqref{eq:1} and \eqref{eq:2}, we obtain
        \begin{align*}
            \|u(t)\|_{H}&\lesssim\|\tilde u(t)\|_{H} + \|\hat u(t)\|_{H}\\
            &\lesssim \|\tilde u(0)\|_{H}^{1-\omega(t)}\|\tilde u(T)\|^{\omega(t)} + \|Bu\|_{\mathcal{Y}'}\\
            &\lesssim \|u(0)\|_{H}^{1-\omega(t)}(\|u(T)\|_{H} + \|Bu\|_{\mathcal{Y}'})^{\omega(t)} + \|Bu\|_{\mathcal{Y}'}\\
            &\lesssim (\|u(0)\|_{H}+\|Bu\|_{\mathcal{Y}'})^{1-\omega(t)}(\|u(T)\|_{H} + \|Bu\|_{\mathcal{Y}'})^{\omega(t)}.
        \end{align*}
        The remaining estimate follows analogously.
\end{proof}

\begin{remark}[FOSLS]
    The above conditional stability results also hold when we consider a first-order system least squares formulation of the heat equation.
    Following \cite{75.257, 75.28}, we write $u$ as $u_1$ and introduce ${\bf u} = (u_1, {\bf u}_2)$, where ${\bf u}_2 = -\nabla_x u_1$. We set 
    \begin{align}
        \tilde{\mathcal{X}}:=\{{\bf u} = (u_1, {\bf u}_2)\in L_2(J\times H^1(\Omega))\times L_2(J\times \Omega)^d\colon \divv {\bf u}\in L_2(J\times\Omega)\}.
    \end{align}
    Consider the operator $\tilde B\colon \tilde{\mathcal{X}}\to L_2(J\times \Omega)^d\times L_2(J\times\Omega)$ defined by
    \begin{align}
        \tilde{B} {\bf u} := ({\bf u}_2 + \nabla_x u_1, \divv {\bf u}). 
    \end{align}
    From \cite[Prop. 2.1]{75.28} we see that if $(u_1,{\bf u}_2)\in \tilde{\mathcal{X}}$, then also $u_1\in \mathcal{X}$. Furthermore, following \cite[Section~7.4]{58.7}, we can bound,
    \begin{align*}
        \|Bu_1\|_{\mathcal{Y}'}&=\sup_{0\not=v\in \mathcal{Y}}\frac{\int_J\int_\Omega \partial_tu_1v + \nabla_x u_1\cdot \nabla_xv\,dx\,dt}{\|v||_\mathcal{Y}}
    \end{align*}\begin{align*}
        &=\sup_{0\not=v\in \mathcal{Y}}\frac{\int_J\int_\Omega \divv {\bf u}v+({\bf u}_2+\nabla_x u_1)\cdot\nabla_xv\,dx\,dt}{\|v||_\mathcal{Y}}\\
        &\leq \|\tilde{B}{\bf u}\|_{L_2(J\times \Omega)^{d+1}}.
    \end{align*}
    
    We conclude that Theorem \ref{theorem: conditional stability} also holds for ${\bf u}\in \tilde{\mathcal{X}}$ if we first substitute $\|Bu\|_{\mathcal{Y}'}$ with $\|\tilde{B}{\bf u}\|_{L_2(J\times \Omega)^{d+1}}$ and subsequently, $u$ with $u_1$.
\end{remark}

\section{Least squares approximations for the backward heat equation}\label{sec:least squares}

Our goal is to recover $u(t)$ for $t<T$ using the source term $f$ and end-time data $g$. As mentioned before, for inexact data $(f,g)$, this equation generally has no solution. Therefore, we will only attempt to satisfy \eqref{eq:backward-heat} in a least squares sense. The (regularized) least squares functional we use in this section is motivated by the conditional stability estimates derived earlier. That is, regularization is added to prevent $\|\gamma_0 u\|_{H}$ from blowing up, which allows us to apply the conditional stability results. 

Define $K := (B, \gamma_T)\colon \mathcal{X}\to \mathcal{Y}' \times H$. Let $(\mathcal{X}^\delta)_{\delta\in I}$ be some family of finite-dimensional subspaces of $\mathcal{X}$. For $\delta\in I$ and suitable $0\leq \eps \lesssim 1$, we define the \emph{regularized least squares functional} $G_\eps\colon \mathcal{X}\to \R$ by 
\begin{align}\label{eq:G}
    G_\eps(z):= \|Kz - (f,g)\|^2_{\mathcal{Y}'\times H} + \eps^2\|\gamma_0u\|^2_{H},
\end{align}
and we want to obtain $\bar u_\eps^\delta:=\argmin_{z\in \mathcal{X}^\delta} G_\eps(z)$.

Since we cannot evaluate the norm on $\mathcal{Y}'$, we cannot compute $\bar u_\eps^\delta$. Therefore, we replace this dual norm with a discretized dual norm. Let $(\mathcal{Y}^\delta)_{\delta\in I}$ be a family of finite-dimensional subspaces of $\mathcal{Y}$ that satisfies 
\begin{align}\label{eq:infsup}
    \inf_{\delta \in I}\inf_{\{z\in \mathcal{X}^\delta \colon Bz\not = 0\}}\sup_{0\not= y\in \mathcal{Y}^\delta}\frac{(Bz)(v)}{\|Bz\|_{\mathcal{Y}'}\|v\|_{\mathcal{Y}}}>0.
\end{align}
Upon replacing $\|Bz-f\|_{\mathcal{Y}'}$ in the definition of $G_\eps$ with 
\begin{align}\label{eq:defYdelta}
\|Bz-f\|_{{\mathcal{Y}^\delta}'}:=\sup_{0\not = v\in \mathcal{Y}^\delta}\frac{(f-Bz)(v)}{\|v\|_{\mathcal{Y}}},    
\end{align} 
we obtain a \emph{computable} regularized least squares functional.\footnote{In \eqref{eq:defYdelta}, we will substitute the $\|\cdot\|_\mathcal{Y}$-norm with a different norm $\|\cdot\|_{\mathcal{Y}^\delta}$, that satisfies $\|\cdot\|_\mathcal{Y}\eqsim \|\cdot\|_{\mathcal{Y}^\delta}$ on $\mathcal{Y}^\delta$. Theorem \ref{theorem:residual_estimate}, which is used in the error analysis below, takes this substitution into account. \label{footnote: equivalent norm}}
\begin{align}
    G_\eps^\delta(z):=\|Kz - (f,g)\|_{{\mathcal{Y}^\delta}'\times H}^2 + \eps^2\|\gamma_0u\|^2_{H}.
\end{align}

For $\eps\geq 0$ and $\delta\in I$, the least squares approximation $u_\eps^\delta$ for the backward heat equation is then defined by
\begin{align}\label{eq: definition u}
    u_\eps^\delta := \argmin_{z\in \mathcal{X}^\delta}G_\eps^\delta(z).
\end{align}
Because $K$ is injective, this minimizer exists.

\subsection{Error analysis}\label{sec:error analysis}
The error analysis follows the general framework developed in \cite{58.7}. The following theorem provides an error bound which holds for any $u\in \mathcal{X}$. 
\begin{theorem}\label{theorem:residual_estimate}
    Assume that \eqref{eq:infsup} holds, and let $(f,g)\in \mathcal{Y}' \times H$. Then $u_\eps^\delta$ satisfies, for $u\in \mathcal{X}$,
    \begin{align}\label{eq:res_estimate}
        \|K(u-u_\eps^\delta)\|_{\mathcal{Y}' \times H} + \eps\|\gamma_0(u-u_\eps^\delta)\|_{H}\lesssim \mathcal{E}_{\rm data} + \mathcal{E}_{\rm appr}(\delta) + \eps\|\gamma_0 u\|_{H},
    \end{align}
    where 
    \begin{align*}
        \mathcal{E}_{\rm data} := \|Ku-(f,g)\|_{\mathcal{Y}' \times H}, \quad \mathcal{E}_{\rm appr}(\delta):=\min_{z^\delta\in \mathcal{X}^\delta}\|u-z^\delta\|_\mathcal{X}.
    \end{align*}
\end{theorem}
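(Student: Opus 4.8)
The plan is to run a standard quasi-optimality (Céa-type) argument, with the inf-sup condition \eqref{eq:infsup} playing the decisive role of transferring control from the computable discrete residual norm $\|\cdot\|_{{\mathcal{Y}^\delta}'}$ to the true dual norm $\|\cdot\|_{\mathcal{Y}'}$. Abbreviate the combined residual quantities $\nrm z\nrm^2 := \|Kz\|_{\mathcal{Y}'\times H}^2 + \eps^2\|\gamma_0 z\|_H^2$ and $\nrm z\nrm_\delta^2 := \|Bz\|_{{\mathcal{Y}^\delta}'}^2 + \|\gamma_T z\|_H^2 + \eps^2\|\gamma_0 z\|_H^2$. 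Since the discrete dual norm is a supremum over a subspace of $\mathcal{Y}$, the trivial bound $\|Bz\|_{{\mathcal{Y}^\delta}'}\le \|Bz\|_{\mathcal{Y}'}$ gives $\nrm z\nrm_\delta\lesssim \nrm z\nrm$ for all $z$, while \eqref{eq:infsup} provides the reverse inequality $\nrm z\nrm\lesssim \nrm z\nrm_\delta$ uniformly over $z\in\mathcal{X}^\delta$ and $\delta\in I$. (The footnote's replacement of $\|\cdot\|_\mathcal{Y}$ by an equivalent norm $\|\cdot\|_{\mathcal{Y}^\delta}$ only perturbs these equivalences by fixed constants, so it is harmless.) The left-hand side of \eqref{eq:res_estimate} is, up to constants, $\nrm u-u_\eps^\delta\nrm$, since $\sqrt{a^2+b^2+c^2}\eqsim a+b+c$.

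Next I would insert an arbitrary $z^\delta\in\mathcal{X}^\delta$ and split by the triangle inequality: $\nrm u-u_\eps^\delta\nrm\le \nrm u-z^\delta\nrm + \nrm z^\delta - u_\eps^\delta\nrm$. The first term is harmless: using boundedness of $B$, the trace embedding $\mathcal{X}\hookrightarrow C(\overline J;H)$, and $\eps\lesssim 1$, it is $\lesssim\|u-z^\delta\|_\mathcal{X}$. For the second term, $z^\delta - u_\eps^\delta\in\mathcal{X}^\delta$, so \eqref{eq:infsup} applies and gives $\nrm z^\delta-u_\eps^\delta\nrm\lesssim \nrm z^\delta-u_\eps^\delta\nrm_\delta$. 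The quantity $\nrm\cdot\nrm_\delta$ is the homogeneous seminorm associated with the computable functional, i.e.\ $\sqrt{G_\eps^\delta(z)}=\|(Bz,\gamma_T z,\eps\gamma_0 z)-(f,g,0)\|$ is an affine function of $z$ measured in the product seminorm on ${\mathcal{Y}^\delta}'\times H\times H$; hence the triangle inequality together with minimality of $u_\eps^\delta$ yields $\nrm z^\delta - u_\eps^\delta\nrm_\delta\le \sqrt{G_\eps^\delta(z^\delta)} + \sqrt{G_\eps^\delta(u_\eps^\delta)}\le 2\sqrt{G_\eps^\delta(z^\delta)}$.

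It then remains to bound $\sqrt{G_\eps^\delta(z^\delta)}$. Using $\|Bz^\delta - f\|_{{\mathcal{Y}^\delta}'}\le\|Bz^\delta - f\|_{\mathcal{Y}'}$ I pass to the continuous functional, $\sqrt{G_\eps^\delta(z^\delta)}\lesssim \|(Bz^\delta-f,\gamma_T z^\delta -g,\eps\gamma_0 z^\delta)\|$, and then compare with $u$ via a second triangle inequality: the $u$-contribution produces exactly $\mathcal{E}_{\rm data}=\|Ku-(f,g)\|_{\mathcal{Y}'\times H}$ plus the regularization term $\eps\|\gamma_0 u\|_H$, while the $z^\delta - u$ contribution is controlled by $\|z^\delta - u\|_\mathcal{X}$ exactly as for the first split term (again using $\eps\lesssim1$ for the $\eps\gamma_0(z^\delta-u)$ piece). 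Collecting, $\nrm u-u_\eps^\delta\nrm\lesssim \mathcal{E}_{\rm data} + \|u-z^\delta\|_\mathcal{X} + \eps\|\gamma_0 u\|_H$, and choosing $z^\delta$ to realize $\mathcal{E}_{\rm appr}(\delta)=\min_{z^\delta\in\mathcal{X}^\delta}\|u-z^\delta\|_\mathcal{X}$ finishes the proof.

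I expect the only genuine subtlety to be the second term $\nrm z^\delta-u_\eps^\delta\nrm$: everything hinges on recognizing that $u_\eps^\delta$ is optimal only for the computable functional $G_\eps^\delta$, so minimality and the triangle inequality deliver control merely in the discrete seminorm $\nrm\cdot\nrm_\delta$, and the inf-sup condition \eqref{eq:infsup} is precisely what upgrades this to the true norm $\nrm\cdot\nrm$ --- but only because $z^\delta-u_\eps^\delta$ lies in $\mathcal{X}^\delta$, where the equivalence holds uniformly in $\delta$. All remaining steps are routine applications of the boundedness of $B$, the trace embeddings, and $\eps\lesssim1$.
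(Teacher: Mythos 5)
Your proof is correct, and it is essentially the argument the paper relies on: the paper gives no in-text proof but defers to the framework of \cite{58.7}, where the same quasi-optimality reasoning (triangle inequality around an arbitrary $z^\delta\in\mathcal{X}^\delta$, minimality of $u_\eps^\delta$ for the computable functional, and the inf-sup condition \eqref{eq:infsup} to upgrade the discrete dual norm to $\|\cdot\|_{\mathcal{Y}'}$ on $\mathcal{X}^\delta$) is carried out. Your identification of the inf-sup step as the only non-routine ingredient, and your handling of the footnote's norm substitution, match that framework exactly.
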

Choosing $\eps$ appropriately, we obtain the following corollary.
\new{
\begin{corollary}\label{cor:errors}
    If $\eps>0$ is chosen such that
    \begin{align}\label{eq:epsilon_choice}
        \eps\|\gamma_0 u\|_{H}\eqsim \mathcal{E}_{\rm data} + \mathcal{E}_{\rm appr}(\delta),
    \end{align}
    we obtain the following error estimates.\footnote{If we only have $\eps\|\gamma_0 u\|_{H}\gtrsim \mathcal{E}_{\rm data} + \mathcal{E}_{\rm appr}(\delta)$ then the error estimates in Corollary \ref{cor:errors} still hold if we substitute $\mathcal{E}_{\rm data} + \mathcal{E}_{\rm appr}(\delta)$ with $\eps\|\gamma_0 u\|_{H}$.}
    
    For $t\in (0,T)$, it holds that 
    \begin{align}
        \|u(t) - u_\eps^\delta(t)\|_{H}\lesssim (\mathcal{E}_{\rm data} + \mathcal{E}_{\rm appr}(\delta))^{\omega(t)}(\|\gamma_0 u\|_{H} + \mathcal{E}_{\rm data} + \mathcal{E}_{\rm appr}(\delta))^{1-\omega(t)}.
    \end{align}
 
    Assuming that $H=L_2(\Omega)$ and $V=H_0^1(\Omega)$, with $(\Omega, A)$ being $(1+\eps)$-regular. For $\beta\in [0,1)$ with $M:=\min\{\|\gamma_0 u\|_{H}, \mathcal{E}_{\rm data} + \mathcal{E}_{\rm appr}(\delta) + 1\}$, we have
    \begin{equation}
    \begin{aligned}
        \|u - &u_\eps^\delta\|_{L_2(J;H^\beta(\Omega))}\lesssim \\&M\sqrt{\int_0^T t^{-2\frac{\beta}{1+\eps}} \left(\frac{\mathcal{E}_{\rm data} + \mathcal{E}_{\rm appr}(\delta)}{M} \right)^{(2-2\frac{\beta}{1+\eps})\omega(t)}dt} + \mathcal{E}_{\rm data} + \mathcal{E}_{\rm appr}(\delta).
    \end{aligned}
    \end{equation}
\end{corollary}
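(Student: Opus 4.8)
The plan is to apply the two conditional stability estimates of Theorem~\ref{theorem: conditional stability} not to $u$ itself but to the error $w := u - u_\eps^\delta$, which again lies in $\mathcal{X}$, and to control the data of $w$ by the residual bound of Theorem~\ref{theorem:residual_estimate}. Since the three quantities $\|w(0)\|_{H}$, $\|w(T)\|_{H}$ and $\|Bw\|_{\mathcal{Y}'}$ entering the stability estimates are just the components of $\gamma_0 w$ and $Kw=(Bw,\gamma_T w)$, the whole corollary reduces to a substitution followed by elementary bookkeeping.

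First I would translate Theorem~\ref{theorem:residual_estimate} under the calibration~\eqref{eq:epsilon_choice}. Because $\eps\|\gamma_0 u\|_{H}\eqsim \mathcal{E}_{\rm data}+\mathcal{E}_{\rm appr}(\delta)$, the right-hand side of~\eqref{eq:res_estimate} is $\lesssim \mathcal{E}_{\rm data}+\mathcal{E}_{\rm appr}(\delta)$. Reading off the two summands on the left-hand side separately gives, on the one hand, $\|Kw\|_{\mathcal{Y}'\times H}\lesssim \mathcal{E}_{\rm data}+\mathcal{E}_{\rm appr}(\delta)$, whence $\|Bw\|_{\mathcal{Y}'}$ and $\|w(T)\|_{H}=\|\gamma_T w\|_{H}$ are each $\lesssim \mathcal{E}_{\rm data}+\mathcal{E}_{\rm appr}(\delta)$; and, on the other hand, $\eps\|\gamma_0 w\|_{H}\lesssim \eps\|\gamma_0 u\|_{H}$, so that, using $\eps>0$, $\|w(0)\|_{H}\lesssim \|\gamma_0 u\|_{H}$.

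For the pointwise estimate I would simply insert these bounds into the first inequality of Theorem~\ref{theorem: conditional stability} applied to $w$: the factor $(\|w(0)\|_{H}+\|Bw\|_{\mathcal{Y}'})^{1-\omega(t)}$ is controlled by $(\|\gamma_0 u\|_{H}+\mathcal{E}_{\rm data}+\mathcal{E}_{\rm appr}(\delta))^{1-\omega(t)}$ and the factor $(\|w(T)\|_{H}+\|Bw\|_{\mathcal{Y}'})^{\omega(t)}$ by $(\mathcal{E}_{\rm data}+\mathcal{E}_{\rm appr}(\delta))^{\omega(t)}$, which is exactly the claimed bound.

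The $L_2(J;H^\beta(\Omega))$-estimate follows the same pattern from the second inequality of Theorem~\ref{theorem: conditional stability}, and this is the only place where genuine care is needed, so I expect it to be the main obstacle. There the relevant constant is $M_w:=\min\{\|w(0)\|_{L_2(\Omega)},\,\|w(T)\|_{L_2(\Omega)}+\|Bw\|_{\mathcal{Y}'}+1\}$, whereas the corollary is stated with $M:=\min\{\|\gamma_0 u\|_{H},\,\mathcal{E}_{\rm data}+\mathcal{E}_{\rm appr}(\delta)+1\}$; likewise the base of the exponent in the integrand is $\|w(T)\|_{L_2(\Omega)}+\|Bw\|_{\mathcal{Y}'}$ rather than $\mathcal{E}_{\rm data}+\mathcal{E}_{\rm appr}(\delta)$. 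To pass from one to the other I would rewrite the integrand as $M_w^{\,2-q(t)}\bigl(\|w(T)\|_{L_2(\Omega)}+\|Bw\|_{\mathcal{Y}'}\bigr)^{q(t)}$, with $q(t):=(2-\tfrac{2\beta}{1+\eps})\omega(t)$, and observe that for the admissible range of $\beta$ both exponents $q(t)$ and $2-q(t)$ are nonnegative and uniformly bounded in $t$. The bounds $M_w\lesssim M$ and $\|w(T)\|_{L_2(\Omega)}+\|Bw\|_{\mathcal{Y}'}\lesssim \mathcal{E}_{\rm data}+\mathcal{E}_{\rm appr}(\delta)$ from the previous step can then be raised to these exponents factorwise, with $t$-independent constants, so the integrand is dominated by $M^2\,t^{-2\beta/(1+\eps)}\bigl((\mathcal{E}_{\rm data}+\mathcal{E}_{\rm appr}(\delta))/M\bigr)^{q(t)}$. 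Taking square roots under the integral and adding the outer term $\|Bw\|_{\mathcal{Y}'}\lesssim \mathcal{E}_{\rm data}+\mathcal{E}_{\rm appr}(\delta)$ yields the stated estimate. The delicate point is precisely that the replacement of $M_w$ by $M$ inside the integral goes in the correct direction; this is what the nonnegativity of $2-q(t)$ (i.e. the restriction on $\beta$) and the ``$+1$'' in the definitions of $M_w$ and $M$, which keep these quantities comparable, are there to guarantee.
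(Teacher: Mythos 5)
Your proposal is correct and follows essentially the same route as the paper: substitute the calibration \eqref{eq:epsilon_choice} into the residual bound \eqref{eq:res_estimate} to get $\|\gamma_0(u-u_\eps^\delta)\|_H\lesssim\|\gamma_0 u\|_H$ and $\|K(u-u_\eps^\delta)\|_{\mathcal{Y}'\times H}\lesssim\mathcal{E}_{\rm data}+\mathcal{E}_{\rm appr}(\delta)$, then feed these into both estimates of Theorem~\ref{theorem: conditional stability} applied to $u-u_\eps^\delta$. The paper states this in two lines; your extra bookkeeping (raising the factorwise bounds to the bounded nonnegative exponents $q(t)$ and $2-q(t)$ to replace $M_w$ by $M$ inside the integral) is exactly the detail the paper leaves implicit, and it is handled correctly.
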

\begin{proof}
    Substituting \eqref{eq:epsilon_choice} in \eqref{eq:res_estimate} shows that
    $$
    \left\{
    \begin{array}{rcl}
    \|\gamma_0(u-u_\eps^\delta)\|_{H} & \lesssim&  \|\gamma_0 u\|_{H},\\
    \|K(u-u_\eps^\delta)\|_{\mathcal{Y}' \times H} & \lesssim& \cE_{\rm data}+\cE_{\rm appr}(\delta),
    \end{array}
    \right.
    $$
    which we substitute into the conditional stability estimates from Theorem \ref{theorem: conditional stability}.
\end{proof}
\begin{remark}
    Assuming that $\omega(t) \geq C t^\alpha$ on $[0,\delta]$ for some constants $C > 0$, $\alpha > 0$, and $\delta > 0$, from Remark~\ref{remark:decay rate}, we conclude that the decay rate of $\|u - u_\eps^\delta\|_{L_2(J;H^\beta(\Omega))}$ is governed by 
    \begin{align}
        \left( -\log (\cE_{\rm data}+\cE_{\rm appr}(\delta)) \right)^{-\tfrac{1 - 2\frac{\beta}{1+\eps}}{2\alpha}}.
    \end{align}
\end{remark}
}
\subsection{Iterative solution}\label{sec:iterative solution}

In this section, we discuss how to obtain $u_\eps^\delta$ using the Preconditioned Conjugate Gradient (PCG) method as an approximation to some $u\in \mathcal{X}$. We assume that our regularization parameter satisfies $\eps \|\gamma_0 u\|_{H}\eqsim \mathcal{E}_{\rm data} + \mathcal{E}_{\rm appr}(\delta)$.
\footnote{Choosing $\eps$ appropriately, would in practice require some knowledge of the size of the data error and the norm of $\gamma_0 u$.}

We further assume that there exist (uniform) preconditioners $G_\mathcal{Y}^\delta = {G_\mathcal{Y}^\delta}'\in \Lis ({\mathcal{Y}^\delta}',\mathcal{Y}^\delta)$ and $G_\mathcal{X}^\delta = {G_\mathcal{X}^\delta}'\in \Lis ({\mathcal{X}^\delta}',\mathcal{X}^\delta)$ satisfying 
\begin{align}\label{eq:preconditioners}
    \|G_\mathcal{W}^\delta f\|_\mathcal{W}\eqsim f(G_\mathcal{W}^\delta f)\quad(f\in {W^\delta}', \delta\in I), 
\end{align}
for $\mathcal{W}\in \{\mathcal{X}, \mathcal{Y}\}$. 

Such a preconditioner can be used to \emph{define} the inner product
\begin{align}\label{eq:new inner-product}
    \langle v, \tilde v\rangle_{\mathcal{W}^\delta}:= ((G_\mathcal{W}^\delta)^{-1}\tilde v)(v)\quad (v,\tilde v\in \mathcal{W}^\delta),
\end{align}
which, thanks to \eqref{eq:preconditioners}, satisfies $\|\cdot \|_{\mathcal{W}^\delta}\eqsim \|\cdot \|_\mathcal{W}$ on $\mathcal{W}^\delta$.

Further, since by definition the $G_\mathcal{W}^\delta$ is the Riesz lifter associated to the Hilbert space $(\mathcal{W}^\delta, \langle\cdot, \cdot\rangle_{\mathcal{W}^\delta})$, it follows that for $f\in {\mathcal{W}^\delta}'$,
\begin{align}\label{eq:discrete dual norm}
    \|f\|^2_{{\mathcal{W}^\delta}' }:=\sup_{0\not =v\in\mathcal{W}^\delta}\frac{|f(v)|^2}{\|v\|_{\mathcal{W}^\delta}^2}=\|G_\mathcal{W}^\delta f\|^2_{\mathcal{W}^\delta}=f(G_\mathcal{W}^\delta f).
\end{align}

The preconditioner $G_\mathcal{Y}^\delta$ is used to replace the norm $\|\cdot\|_\mathcal{Y}$ on $\mathcal{Y}^\delta$ with $\|\cdot \|_{\mathcal{Y}^\delta}$, allowing us to reformulate \eqref{eq: definition u} as a symmetric positive definite (SPD) system. The preconditioner $G_\mathcal{X}^\delta$ is then used as a preconditioner for this system.

\subsubsection{Obtaining an SPD system}
Following footnote\footref{footnote: equivalent norm}, we replace $\|\cdot \|_\mathcal{Y}$ with $\|\cdot\|_{\mathcal{Y}^\delta}$ in the definition of $u^\delta_\eps$, without affecting the quality of our solution $u_\eps^\delta$. 

Thanks to \eqref{eq:discrete dual norm}, we can equivalently compute $u_\eps^\delta$ as 
\begin{align*}
    u_\eps^\delta = \argmin_{z\in \mathcal{X}^\delta}\{(Bz - f)(G_\mathcal{Y}^\delta (Bz - f)) + \|\gamma_Tz - g\|^2 + \eps^2 \|\gamma_0 z\|^2\},
\end{align*}
which, by employing the Euler-Lagrange equations, is equivalent to 
\begin{equation}\label{eq:spd system}
\begin{gathered}
(S^\delta_\eps u_\eps^\delta)(\tilde z^\delta):=(Bu_\eps^\delta)(G_\mathcal{Y}^\delta B\tilde z^\delta) + \langle \gamma_T \tilde z^\delta, \gamma_T u_\eps^\delta\rangle_{H} + \eps^2\langle \gamma_0 \tilde z^\delta, \gamma_0 u_\eps^\delta\rangle_{H} \\= f(G_\mathcal{Y}^\delta B\tilde z^\delta) + \langle \gamma_T \tilde z^\delta, g\rangle_{H}=:h^\delta(\tilde z^\delta)\quad (\tilde z^\delta \in \mathcal{X}^\delta).
\end{gathered}
\end{equation}

\subsubsection{Preconditioning the SPD system}
We now show that the operator $G_\mathcal{X}^\delta$ can be used as a preconditioner for the SPD system \eqref{eq:spd system}.  
For convenience we define an $\eps$-dependent norm on $\mathcal{X}$,
\begin{align}
\nrm \cdot \nrm^2_\eps:=\|K\cdot\|^2_{\mathcal{Y}' \times H} + \eps \|\gamma_0\cdot\|^2_{H},
\end{align}
and for $z^\delta\in \mathcal{X}^\delta$ we set
\begin{align}
\nrm z^\delta \nrm^2_{\eps, \delta}:=(Bz^\delta)(G_\mathcal{Y}^\delta B z^\delta) + \langle \gamma_Tz^\delta, \gamma_Tz^\delta\rangle_{H} + \eps^2\langle \gamma_0 z^\delta, z^\delta\rangle_{H}.
\end{align}
Then thanks to \eqref{eq:infsup}, for $z^\delta \in \mathcal{X}^\delta$, it holds that $\nrm z^\delta\nrm_\eps\eqsim \nrm z^\delta\nrm_{\eps, \delta}$. Furthermore, thanks to $(B, \gamma_0)\in \Lis(\mathcal{X}, \mathcal{Y}\times H)$ and $\|\cdot\|_{\mathcal{X}^\delta}\eqsim \|\cdot\|_\mathcal{X}$ on $\mathcal{X}^\delta$, we obtain the following equivalence of norms on $\mathcal{X}^\delta$
\begin{align}\label{eq:equivalence norms}
    \eps^2 \|\cdot \|_{\mathcal{X}^\delta}\lesssim \nrm \cdot \nrm^2_{\eps,\delta}\lesssim \max(1, \eps^2) \|\cdot\|_{\mathcal{X}^\delta}.
\end{align}
This means that $G_\mathcal{X}^\delta$ can be used as a preconditioner for \eqref{eq:spd system}, although with an unfavorable $\eps$-dependence on the condition number.
\subsubsection{Stopping criterion}
Let $\tilde u_\eps^\delta\in \mathcal{X}^\delta$ be an intermediate solution produced by the PCG method. To obtain a stopping criterion, we need some condition to determine when the quality of the intermediate solution is satisfactory. The error analysis in Section ~\ref{sec:error analysis} hinges on the validity of \eqref{eq:res_estimate}. Hence, if $\tilde u_\eps^\delta$ also satisfies this bound, then the error analysis for this function is qualitatively the same as the error analysis for $u^\delta_\eps$.

We can bound the terms appearing in \eqref{eq:res_estimate} as follows
\begin{align*}
    \|K(u - \tilde u^\delta_\eps)\|_{\mathcal{Y}' \times H} + \eps\|\gamma_0(u-\tilde u^\delta_\eps)\|_{H} &\lesssim \nrm u - u_\eps^\delta\nrm_\eps + \nrm u_\eps^\delta - \tilde u_\eps^\delta\nrm_{\eps, \delta}\\
    &\lesssim \mathcal{E}_{\rm data} + \mathcal{E}_{\rm appr}(\delta) + \nrm u_\eps^\delta - \tilde u_\eps^\delta\nrm_{\eps, \delta},
\end{align*}
and we see that we want to achieve 
\begin{align}\label{eq:stopping criterion 1}
    \nrm u_\eps^\delta - \tilde u_\eps^\delta\nrm_{\eps, \delta} \lesssim \mathcal{E}_{\rm data} + \mathcal{E}_{\rm appr}(\delta).
\end{align}
To obtain our stopping criterion we bound $\nrm u_\eps^\delta - \tilde u_\eps^\delta\nrm_{\eps, \delta}$ as follows
\begin{align*}
    \nrm u_\eps^\delta - \tilde u_\eps^\delta\nrm_{\eps, \delta} &= \sup_{0\not = v^\delta\in \mathcal{X}^\delta}\frac{(h^\delta - S_\eps^\delta \tilde u_\eps^\delta)(v^\delta)}{\nrm v^\delta\nrm_{\eps,\delta}}\\
    &\lesssim \eps^{-1}\sup_{0\not = v^\delta\in \mathcal{X}^\delta}\frac{(h^\delta - S_\eps^\delta \tilde u_\eps^\delta)(v^\delta)}{\|v^\delta\|_{\mathcal{X}^\delta}}\\
    &=\eps^{-1}(h^\delta-S_\eps^\delta \tilde u_\eps^\delta)(G_\mathcal{X}^\delta(h^\delta - S_\eps^\delta \tilde u_\eps^\delta)),
\end{align*}
where we used \eqref{eq:equivalence norms} and \eqref{eq:discrete dual norm}. We see that \eqref{eq:stopping criterion 1} is achieved if
\begin{align*}
    (h^\delta-S_\eps^\delta \tilde u_\eps^\delta)(G_\mathcal{X}^\delta(h^\delta - S_\eps^\delta \tilde u_\eps^\delta)) \lesssim \eps (\mathcal{E}_{\rm data} + \mathcal{E}_{\rm appr}(\delta))\eqsim \frac{(\mathcal{E}_{\rm data} + \mathcal{E}_{\rm appr}(\delta))^2}{\|\gamma_0 u\|_{H}},
\end{align*}
which is the stopping criterion we will use in our numerical experiments.
\begin{remark}
    The first inequality in \eqref{eq:equivalence norms} is expected to be very crude on finite element spaces. Other methods could also be used to estimate $\nrm u_\eps^\delta - \tilde u_\eps^\delta\nrm_{\eps, \delta}$ using data obtained from the PCG method (see \cite{75.56, 203.6, 15.91}). 
\end{remark}

\subsection{Concrete setting}\label{sec:concrete setting}
To finish this section, we show how to apply the least squares method to the backward heat equation with Dirichlet boundary conditions. The backward heat equation with Dirichlet boundary conditions reads as, find $u\colon J\times\Omega\to \R$ that satisfies 
\be \label{eq:backward-heat2}
\left\{\begin{array}{rl}
\partial_t u-\triangle_x u = f& \text{on }J\times\Omega,\\
u=0& \text{on }J\times \partial \Omega,\\
u(T)=g& \text{on }\Omega,
\end{array}
\right.
\ee
where $\Omega\subset \R^d$ is a Lipschitz domain. In the associated functional framework, we set $H=L_2(\Omega)$, $V=H_0^1(\Omega)$. The bilinear form is defined by $a(t; \eta, \zeta):=\int_\Omega \nabla_x \eta\cdot \nabla_x\zeta$. 

We need to consider the choice of the family of \emph{trial spaces} $(\mathcal{X}^\delta)_{\delta\in I}$ and \emph{test spaces} $(\mathcal{Y}^\delta)_{\delta\in I}$. In order for these families to be valid choices, we need inf-sup stability ~\eqref{eq:infsup} to be satisfied. Otherwise, the error analysis from Section ~\ref{sec:error analysis} may not hold. Furthermore, an efficient iterative method requires the existence of preconditioners satisfying ~\eqref{eq:preconditioners}.

Let $(\tria^\delta)_{\delta\in\Delta}$ be a family of conforming, uniformly shape regular partitions of $\overline{\Omega}$ into $d$-simplices. Let $(J^\delta)_{\delta\in\Delta}$ be a partition of $\overline{J}$. We define 
\begin{align*}
\mathcal{S}_{\tria^\delta}^{0,p}:=\{z\in C(\overline{\Omega})\colon z_{|T}\in \mathcal{P}_p(T); (T\in \tria^\delta)\},    
\end{align*}
and
\begin{align*}
    \mathcal{S}_{J^\delta}^{-1,p}:=\{z\in L_2(J)\colon z_{|e}\in P_p(e);(e\in J^\delta)\},
\end{align*}
and $\mathcal{S}_{J^\delta}^{0,p}:=C(\overline{J})\cap S_{J^\delta}^{-1,p}$.

For $p\in \N$, we then define our \emph{trial} and \emph{test} spaces as follows,
\begin{equation}\label{eq:definition fem spaces}
\begin{aligned}
    \mathcal{X}^\delta &:= \mathcal{X}\cap (S_{J^\delta}^{0,p}\otimes \mathcal{S}_{\tria^\delta}^{0,p}),\\
    \mathcal{Y}^\delta &:=\mathcal{S}_{J^\delta}^{-1,p}\otimes (\mathcal{S}_{\tria^\delta}^{0,p+d+1}\cap H_0^1(\Omega)).
\end{aligned}
\end{equation}
\subsubsection{Verification of \eqref{eq:infsup}}
The next lemma shows that we have inf-sup stability for the above choice of trial and test spaces.
\begin{lemma}\label{lemma:infsup}
    For any $p\in \N$, \eqref{eq:infsup} holds.
\end{lemma}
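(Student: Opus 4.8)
The plan is to verify \eqref{eq:infsup} by constructing a \emph{Fortin operator}, that is, a family of linear maps $\Pi^\delta\colon\mathcal{Y}\to\mathcal{Y}^\delta$ that is uniformly bounded, $\|\Pi^\delta v\|_{\mathcal{Y}}\lesssim\|v\|_{\mathcal{Y}}$, and satisfies the commutation property
\[
(Bz)(v-\Pi^\delta v)=0\qquad(z\in\mathcal{X}^\delta,\ v\in\mathcal{Y}).
\]
Granting this, the inf-sup bound is immediate: for $z\in\mathcal{X}^\delta$ with $Bz\neq0$, choosing $v\in\mathcal{Y}$ with $(Bz)(v)\neq0$ gives $\Pi^\delta v\in\mathcal{Y}^\delta\setminus\{0\}$, and $(Bz)(\Pi^\delta v)=(Bz)(v)$ together with $\|\Pi^\delta v\|_{\mathcal{Y}}\lesssim\|v\|_{\mathcal{Y}}$ yields $\sup_{0\neq w\in\mathcal{Y}^\delta}(Bz)(w)/\|w\|_{\mathcal{Y}}\gtrsim\|Bz\|_{\mathcal{Y}'}$, with a constant independent of $\delta$.

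I would build $\Pi^\delta$ as a tensor product $\Pi^\delta=\Pi_t\otimes\Pi_x$ on $\mathcal{Y}=L_2(J)\otimes H_0^1(\Omega)$, where $\Pi_t$ is the elementwise $L_2(J)$-orthogonal projection onto $\mathcal{S}_{J^\delta}^{-1,p}$ (of norm one) and $\Pi_x\colon H_0^1(\Omega)\to\mathcal{S}_{\tria^\delta}^{0,p+d+1}\cap H_0^1(\Omega)$ is a spatial operator still to be constructed. Writing $(Bz)(v)=\int_J\langle\partial_t z,v\rangle_{L_2(\Omega)}+\langle\nabla_x z,\nabla_x v\rangle_{L_2(\Omega)}\,dt$ and using the identity $I-\Pi_t\otimes\Pi_x=(I-\Pi_t)\otimes I+\Pi_t\otimes(I-\Pi_x)$, the commutation property splits into four contributions. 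For each fixed $x$, both $\partial_t z(\cdot,x)$ and $\nabla_x z(\cdot,x)$ are, on each time interval, polynomials of degree at most $p$ and hence lie in the range of $\Pi_t$; consequently the two $(I-\Pi_t)\otimes I$ contributions vanish by $L_2(J)$-orthogonality. After commuting $\Pi_t$ (a time operator) past the spatial integration, the remaining two contributions reduce to the requirements that $\Pi_x$ satisfy, for all $\psi\in H_0^1(\Omega)$: (a) $\langle\psi-\Pi_x\psi,w\rangle_{L_2(\Omega)}=0$ for every $w\in\mathcal{S}_{\tria^\delta}^{0,p}$ (from pairing with the spatial part of $\partial_t z$), and (b) $\langle\nabla_x(\psi-\Pi_x\psi),\nabla_x\chi\rangle_{L_2(\Omega)}=0$ for every $\chi\in\mathcal{S}_{\tria^\delta}^{0,p}\cap H_0^1(\Omega)$ (from pairing with the spatial part of $z$).

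It therefore remains to construct $\Pi_x$ satisfying (a), (b), and the uniform energy bound $\|\nabla_x\Pi_x\psi\|_{L_2(\Omega)}\lesssim\|\nabla_x\psi\|_{L_2(\Omega)}$; the $\mathcal{Y}$-stability of $\Pi^\delta$ then follows since $\|\Pi_t\|_{\mathcal{L}(L_2(J))}=1$. I would start from the Ritz (energy) projection onto $\mathcal{S}_{\tria^\delta}^{0,p}\cap H_0^1(\Omega)$, which already fulfills (b) and is $H_0^1$-stable, and then add a local correction that enforces the $L_2$-moment condition (a) without disturbing (b). This correction is assembled from element and face bubble functions: multiplying the degree-$p$ local shape functions by interior simplex bubbles (products of the $d+1$ barycentric coordinates, of degree $d+1$) and by face bubbles produces corrections of total degree at most $p+d+1$, which is precisely the enrichment built into $\mathcal{Y}^\delta$. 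Solvability of the resulting local moment problems, with bounds uniform in $\delta$, would follow from a scaling argument to the reference simplex combined with the shape regularity of $(\tria^\delta)_{\delta\in\Delta}$.

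I expect the spatial construction to be the main obstacle: one must simultaneously meet the $L_2$-orthogonality (a) against \emph{all} of $\mathcal{S}_{\tria^\delta}^{0,p}$ (including functions not vanishing on inter-element faces or on $\partial\Omega$) and the energy-orthogonality (b), while keeping the map conforming ($\Pi_x\psi\in H_0^1(\Omega)$) and $H_0^1$-stable. Verifying that the bubble degrees of freedom of degree up to $p+d+1$ render every local correction problem well-posed with mesh-independent constants is the delicate point; the choice of the temporal projection and of the degree jump $d+1$ are tailored precisely so that the two orthogonality conditions can be satisfied at once.
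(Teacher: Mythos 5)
Your skeleton coincides with the paper's: inf-sup via a uniformly bounded Fortin operator of tensor-product form, with the time factor being the elementwise $L_2(J)$-orthogonal projection onto $\mathcal{S}^{-1,p}_{J^\delta}$ (the paper invokes Theorem~4.3 of \cite{58.7} for the reduction you re-derive by hand), and your reduction of the commutation property to the two spatial conditions (a) and (b) is correct. The essential difference is what happens next. The paper does \emph{not} enforce your global conditions (a)--(b) directly: it integrates by parts elementwise, whereby the energy condition (b) is replaced by purely \emph{local} moment conditions --- vanishing moments of the error against $\mathcal{P}_p(K)$ on each element (covering both (a) and the $\triangle_x z_x^\delta$ volume terms) and against $\mathcal{P}_{p+1}(e)$ on each face (killing the normal-derivative terms $\int_e \frac{\partial z_x^\delta}{\partial n}(\identity-Q_x^\delta)v_x\,ds$). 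A spatial operator into $\mathcal{S}^{0,p+d+1}_{\tria^\delta}\cap H_0^1(\Omega)$ with exactly these local orthogonality properties and uniform $H_0^1$-bounds is then quoted from \cite[Section~4.1]{204.19}; such an operator can be built because all constraints are local moments and can be enforced sequentially (quasi-interpolant, then face-bubble corrections, then element-bubble corrections, the latter leaving the face moments untouched since element bubbles vanish on faces).

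Your construction instead keeps (b) as a global energy-orthogonality (Ritz) property and proposes to enforce (a) afterwards by bubble corrections ``without disturbing (b)''. This step fails, and it is the heart of the lemma. If $c=\sum_K b_K q_K$ is an element-bubble correction matching the moments $\langle \psi-R\psi, r\rangle_{L_2(K)}$ for all $r\in\mathcal{P}_p(K)$, then its energy products with trial functions are
\begin{equation*}
\langle\nabla_x c,\nabla_x\chi\rangle_{L_2(K)}=-\langle c,\triangle_x\chi\rangle_{L_2(K)},\qquad \triangle_x\chi|_K\in\mathcal{P}_{p-2}(K)\subset\mathcal{P}_p(K),
\end{equation*}
so for $p\geq 2$ preserving (b) in your elementwise scheme would force $\langle\psi-R\psi,r'\rangle_{L_2(K)}=0$ for all $r'\in\mathcal{P}_{p-2}(K)$, which is false in general; face bubbles are worse, destroying (b) for every $p$ through the terms $\int_e b_e q\,(\text{jump of }\partial_n\chi\text{ across }e)\,ds$. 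In your formulation, (a) and (b) are genuinely coupled, and no sequential ``Ritz then local correction'' scheme produces them; this coupling is exactly what the paper's elementwise integration by parts removes before any bubbles are introduced. There is a second, independent problem: even in the one case where your bookkeeping closes ($p=1$, element bubbles only, since then $\triangle_x\chi|_K=0$), uniform $H^1$-stability of the corrected operator requires $\sum_K h_K^{-2}\|\psi-R\psi\|^2_{L_2(K)}\lesssim\|\nabla_x\psi\|^2_{L_2(\Omega)}$, i.e.\ an Aubin--Nitsche duality estimate for the Ritz projection, which needs elliptic regularity beyond what the Lipschitz domains admitted by the lemma provide. Replacing the Ritz projection by a Scott--Zhang-type quasi-interpolant repairs the stability but sacrifices (b) entirely --- which lands you back on the paper's route: localize (b) by integration by parts first, then enforce all the resulting element and face moments with bubbles.
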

\begin{proof}
    According to Theorem 4.3 in \cite{58.7}, proving this lemma amounts to the construction of a family of Fortin interpolators $Q^\delta\colon \mathcal{Y}\to \mathcal{Y}^\delta$, with $\sup_{\delta\in\Delta}\|Q^\delta\|_{\mathcal{L}(\mathcal{Y},\mathcal{Y})}<\infty$, such that 
    \begin{align}\label{eq:Fortin}
        (B\mathcal{X}^\delta)((\identity-Q^\delta)\mathcal{Y}) = 0.
    \end{align} 
     Our Fortin interpolator will be of the form $Q^\delta=Q^\delta_t\otimes Q^\delta_x$.
     
    Since $L_2(J)\otimes H_0^1(\Omega)$ is isomorphic to $\mathcal{Y}$, we only need to verify \eqref{eq:Fortin} for functions of the form $v = v_t\cdot v_x$ with $v_t\in L_2(\Omega)$ and $v_x\in H_0^1(\Omega)$. 
    Let $z^\delta = z_t^\delta\cdot z_x^\delta\in \mathcal{X}^\delta$, we then compute
    \begin{align*}
        (Bz^\delta)((\identity-Q^\delta) v)) &= (Bz^\delta)((\identity-Q_t^\delta)\otimes\identity \;v)+(Bz^\delta)(Q_t^\delta\otimes(\identity-Q_x^\delta)v).
    \end{align*}
    
    The first term is equal to
    \begin{align*}
        \sum_{e\in J^\delta}\sum_{T\in T^\delta}\left\{\int_e\partial_tz^\delta_t(\identity-Q_t^\delta)v_tdt\int_Tz_x^\delta v_xdx + \int_e z_t^\delta(\identity-Q_t^\delta )v_tdt\int_T\nabla_xz_x^\delta \nabla_xv_xdx\right\},
    \end{align*}
    which clearly vanishes for $Q_t^\delta$ being the $L_2$-orthogonal projector onto $S^{-1,p}_{J^\delta}$.

    The second term is equal to
    \begin{align*}
        \sum_{e\in J^\delta}\sum_{T\in T^\delta}&\biggl\{
        \int_e
        \partial_tz^\delta_tQ_t^\delta v_tdt
        \int_T z_x^\delta (\identity-Q_x^\delta)v_x dx + \\
        &\int_e z_t^\delta Q_t^\delta v_t dt \biggl(\int_T-\triangle_x z_x^\delta (\identity - Q_x^\delta )v_x dx+\int_{\partial_T}\frac{\partial z_x^\delta}{\partial n}(\identity - Q_x^\delta )v_x ds\biggr)
       \biggr\}.
    \end{align*}
    
   From Section 4.1 from \cite{204.19}, it is known that there exists a Fortin interpolation operator $Q_{x}^\delta\colon H_0^1(\Omega)\to \mathcal{S}_{\tria^\delta}^{0,p + d + 1}\cap H_0^1(\Omega)$, with $\sup_{\delta\in \Delta}\|Q_x^\delta\|_{\Lis(H_0^1(\Omega),H_0^1(\Omega))}<\infty$, such that
   $$
   \ran (\identity - Q_{x}^\delta )|_e\perp_{L_2(e)}\mathcal{P}_{p+1}(e)\;\;\; (e\in \mathcal{F}(\tria^\delta)),
   $$
   and 
   $$
    \ran (\identity - Q_{x}^\delta )|_K\perp_{L_2(K)}\mathcal{P}_{p}(K)\;\;\; (K\in \tria^\delta).
   $$
\end{proof}
\subsubsection{Existence of preconditioners}\label{sec:preconditioners}
Obtaining the preconditioner $G_\mathcal{Y}^\delta$ is straight-forward, see also \cite{58.6}. We equip $\mathcal{S}_{J^\delta}^{-1,p}$ with a $L_2(J)$-orthonormal basis $\Phi_t$ and define $G_{\mathcal{Y},t}^\delta \colon (\mathcal{S}_{J^\delta}^{-1,p})' \to \mathcal{S}_{J^\delta}^{-1,p}$ by $G_{\mathcal{Y},t}^\delta f = \sum_{\phi_t\in \Phi_t}f(\phi_t)\phi_t$. We define $G_{\mathcal{Y},x}^\delta\colon (\mathcal{S}_{\tria^\delta}^{0,p+d+1}\cap H_0^1(\Omega))' \to \mathcal{S}_{\tria^\delta}^{0,p+d+1}\cap H_0^1(\Omega)$ as a symmetric spatial multigrid solver that satisfies \eqref{eq:preconditioners} with $\mathcal{W}$ reading as $H_0^1(\Omega)$. Then it follows that $G_\mathcal{Y}^\delta:=G_{\mathcal{Y},t}^\delta\otimes G_{\mathcal{Y},x}^\delta$ satisfies \eqref{eq:preconditioners}, with $\mathcal{W}=\mathcal{Y}$.

A preconditioner $G_\mathcal{X}^\delta$ that is constructed using a symmetric spatial multigrid solver and a wavelet basis in time that is stable in $L_2(J)$ and $H^1(J)$ can be found in \cite{249.991}.
\section{Numerical experiments}\label{section:numerics}

In this section, we investigate our regularized least squares formulation for solving the backward heat equation. Let $Q$ denote space-time domain $J\times\Omega \subset \R^{d+1}$, where $\Omega=(0,1)^d\subset \R^d$ is the spatial domain and $J=(0,T)$ is the temporal interval. We will specify $T$ in the examples below. For data $f$ and $g$ we consider the backward heat problem of finding $u\in \mathcal{X}$ that satisfies in distributional sense
\begin{equation*}
\left\{\begin{array}{rl}
\partial_t u-\triangle_x u = f& \text{on }J\times\Omega,\\
u=0& \text{on }J\times \partial \Omega,\\
u(T)=g& \text{on }\Omega.
\end{array}
\right.
\end{equation*}

In case $d=2$, let $\mathbbm T$ denote the set of all conforming triangulations generated via \emph{newest vertex bisection}, starting from an initial triangulation $\tria_0$ consisting of four triangles formed by cutting $(0,1)^2$ along its diagonals. The interior vertex in this initial mesh is designated as the newest vertex for all four triangles. 

In the case $d = 3$, let $\mathbbm{T}$ denote the set of all conforming tetrahedral meshes generated via \emph{newest vertex bisection}, starting from an initial triangulation $\tria_0$ of $(0,1)^3$. 
The initial mesh $\tria_0$ is constructed by first decomposing $\Omega$ into $6$ tetrahedra using the Kuhn splitting and subsequently performing one uniform bisection to obtain an interior vertex.

Additionally, let $\mathbbm J$ denote the collection of all one-dimensional temporal grids that are locally quasi-uniform.

Following Section~\ref{sec:concrete setting}, for any $\tria^\delta\in\mathbbm T$ and $J^\delta\in \mathbbm J$, we define the discrete \emph{trial} and \emph{test} spaces as 
\begin{equation}\label{eq:def l}
\begin{aligned}
\mathcal{X}^\delta &:= \mathcal{X} \cap \left( S_{J^\delta}^{0,1} \otimes \mathcal{S}_{\tria^\delta}^{0,1} \right),\\\mathcal{Y}^\delta &:= \mathcal{S}_{J^\delta}^{-1,1} \otimes \left( \mathcal{S}_{\tria^\delta}^{0,1+l} \cap H_0^1(\Omega) \right), 
\end{aligned}
\end{equation} 
where we initially choose $l=d+1$. Then the numerical approximation $u_\eps^\delta\in\mathcal{X}^\delta$ of $u$ satisfies
\begin{equation*}
\begin{gathered}
(Bu_\eps^\delta)(G_\mathcal{Y}^\delta B\tilde z^\delta) + \langle \gamma_T \tilde z^\delta, \gamma_T u_\eps^\delta\rangle_{L_2(\Omega)} + \eps^2\langle \gamma_0 \tilde z^\delta, \gamma_0 u_\eps^\delta\rangle_{L_2(\Omega)} \\= f(G_\mathcal{Y}^\delta B\tilde z^\delta) + \langle \gamma_T \tilde z^\delta, g\rangle_{L_2(\Omega)}\quad (\tilde z^\delta \in \mathcal{X}^\delta).
\end{gathered}
\end{equation*}
We obtain the solution to this system using the PCG method, where we use $G_\mathcal{X}^\delta$ as a preconditioner. The preconditioners $G_\mathcal{X}^\delta$ and $G_\mathcal{Y}^\delta$ are chosen as in Section ~\ref{sec:preconditioners}

Throughout this section, the meshes $J^\delta$ and $\tria^\delta$ are generated by dividing the time interval $J$ into $2^k$ equal subintervals and applying $d\cdot k$ uniform bisections to the initial spatial triangulation $\tria_0$.

In our experiments we prescribe the solution $u(t,x,y):=(1 + t^3)\sin(\pi x)\sin(\pi y)$ in the case $d=2$, and $u(t,x,y,z):=(1+t^3)\sin(\pi x)\sin(\pi y)\sin(\pi z)$ in the case $d=3$, and compute the (exact) data $f$ and $g$ correspondingly.

The numerical experiments were carried out using code adapted from \cite{306.65}.
\subsection{Choice of $l$}
In Lemma ~\ref{lemma:infsup} we showed that for $l=d+1$ the inf-sup condition \eqref{eq:infsup} is satisfied. However, this result is not expected to be sharp. In fact, it turns out that we may choose $l=0$.

Let $\mathcal{Y}^\delta$ and $\hat{\mathcal{Y}}^\delta$ be chosen as in \eqref{eq:def l}, with $l=0$ and $l=d+1$, respectively, and denote their respective preconditioners by $G_\mathcal{Y}^\delta$ and $G_{\hat{\mathcal{Y}}}^\delta$.
Let $\{\varphi_1,\varphi_2,\ldots\}$, $\{\psi_1,\psi_1,\ldots\}$ and $\{\hat\psi_1,\hat\psi_2,\ldots\}$ be bases for $\mathcal{X}^\delta$, $\mathcal{Y}^\delta$ and $\hat{\mathcal{Y}}^\delta$, respectively. Define the matrices ${\bf B}, {\bf \hat B}, {\bf G}$ and ${\bf \hat G}$ by ${\bf B}_{ij} = (B \varphi_j)(\psi_i)$, ${\bf \hat B}_{ij} = (B \varphi_j)(\hat\psi_i)$, $({\bf G}^{-1})_{ij} = ((G_\mathcal{Y}^\delta)^{-1} \psi_j)(\psi_i)$, and $({\bf \hat G}^{-1})_{ij} = ((G_{\hat{\mathcal{Y}}}^\delta)^{-1} \psi_j)(\psi_i)$.
Noticing that $Bz\not =0$ for all $z\in \mathcal{X}^\delta\setminus\{0\}$, $\|Bz\|_{\mathcal{Y}'}\eqsim \|Bz\|_{{\hat{\mathcal{Y}}^\delta}'}$ for $z\in \mathcal{X}^\delta$, and using \eqref{eq:preconditioners}, we deduce that
\begin{align*}
    \gamma^\delta:=&\inf_{\{z\in \mathcal{X}^\delta \colon Bz\not = 0\}}\sup_{0\not= y\in \mathcal{Y}^\delta}\frac{(Bz)(v)}{\|Bz\|_{\mathcal{Y}'}\|v\|_{\mathcal{Y}}} \\=& 
    \inf_{0\not=z\in\mathcal{X}^\delta}\sup_{0\not= y\in \mathcal{Y}^\delta}\frac{(Bz)(v)}{\|Bz\|_{\mathcal{Y}'}\|v\|_{\mathcal{Y}}}\\
    \eqsim&\inf_{0\not=z\in\mathcal{X}^\delta}\sup_{0\not= y\in \mathcal{Y}^\delta}\frac{(Bz)(v)}{[(Bz)(G_{\hat{\mathcal{Y}}}^\delta Bz)]^{\frac12} [((G_\mathcal{Y}^\delta)^{-1} v)(v)]^\frac12}.
\end{align*}
It follows that $(\gamma^\delta)^2$ is equivalent to the smallest generalized eigenvalue of the pencil $({\bf B}^T{\bf G}{\bf B}, {\bf \hat B}^T {\bf \hat G} {\bf \hat B})$ (i.e., ${\bf B}^T{\bf G}{\bf B}{\bf x}=\lambda {\bf \hat B}^T {\bf \hat G} {\bf \hat B} {\bf x}$, for some vector ${\bf x}\not=0$), which is computable.

In the case $d=2$ with $T=1$, for the sequence of meshes described above, we verified that the smallest generalized eigenvalue of this pencil remains about constant. This implies that we may use $l=0$ in the definition of $\mathcal{Y}^\delta$, which is done in the remainder of this section. 

\subsection{Exact data}
We now solve the backward heat equation with unperturbed data. We set the temporal domain $J=(0,1)$, i.e. $T=1$. Since the prescribed solution is smooth, we opt for the regularization parameter $\eps = {\rm DoFs}^{-\frac1d}$ for which we have $\mathcal{E}_{\rm appr}(\delta)\lesssim \eps$. 
\subsubsection{The case $d=2$}
We first consider the case $d=2$. In the left picture of Figure \ref{fig:example 1} the $L_2(\Omega)$-errors of the numerical approximation of $u(t)$ for different values of $t$ are shown. For $t$ close to $T=1$, the convergence rate is about $\mathcal{O}({\rm DoFs}^{-\frac23})$, while for smaller $t$ this convergence gradually weakens.

In the right picture of Figure \ref{fig:example 1} the errors in the space-time $L_2(J; H^1(\Omega))$-norm are given. Surprisingly, the error in this norm converges at an optimal rate $\mathcal{O}({\rm DoFs}^{-\frac13})$.

\begin{figure}[h!]
    \begin{subfigure}{.5\textwidth}
    \centering
    \includegraphics[width=\linewidth]{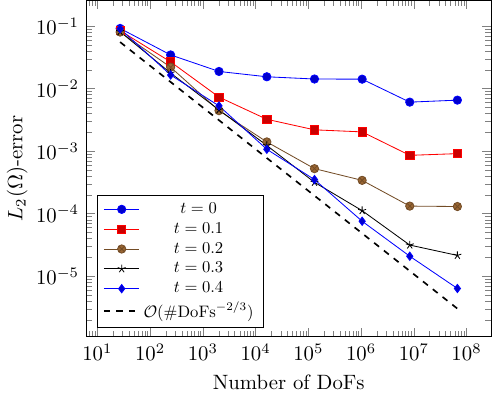}
    \end{subfigure}\hspace*{0cm}%
    \begin{subfigure}{.5\textwidth}
    \centering
    \includegraphics[width=\linewidth]{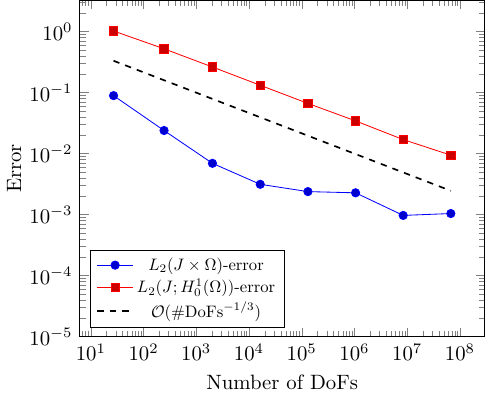}
    \end{subfigure}
\caption{The case $d=2$. Left: $L_2(\Omega)$-error for different values of $t$ versus number of DoFs in $\mathcal{X}^\delta$. Right: $L_2(J\times \Omega)$ and $L_2(J;H^1(\Omega))$-errors versus number of DoFs in $\mathcal{X}^\delta$.}\label{fig:example 1}
\end{figure}
\subsubsection{The case $d=3$}
We now consider the case $d=3$. The numerical results are shown in Figure \ref{fig:example 1 d=3}. The approximations of $u(t)$ for $t$ close to $T=1$ converge with rate $\mathcal{O}({\rm DoFs}^{-\frac12})$ in the $L_2(\Omega)$-norm, and this convergence gradually weakens as $t$ approaches $0$. As in the two-dimensional case, the error in the $L_2(J; H^1(\Omega))$-norm converges at the optimal rate $\mathcal{O}({\rm DoFs}^{-\frac14})$. 

In the remainder, only the case $d=2$ is considered.

\begin{figure}[h!]
    \begin{subfigure}{.5\textwidth}
    \centering
    \includegraphics[width=\linewidth]{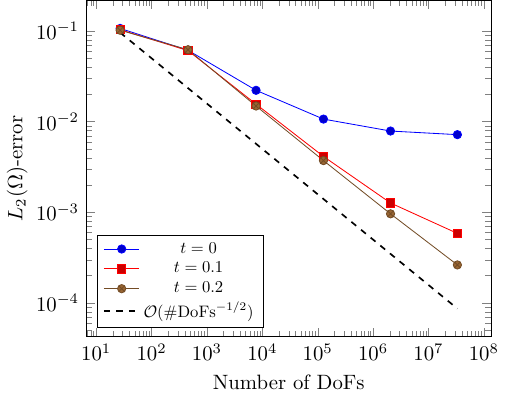}
    \end{subfigure}\hspace*{0cm}%
    \begin{subfigure}{.5\textwidth}
    \centering
    \includegraphics[width=\linewidth]{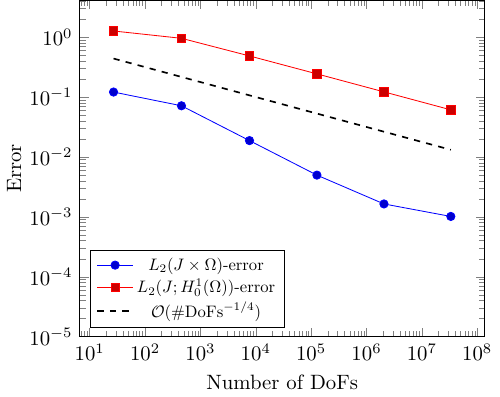}
    \end{subfigure}
\caption{The case $d=3$. Left: $L_2(\Omega)$-error for different values of $t$ versus number of DoFs in $\mathcal{X}^\delta$. Right: $L_2(J\times \Omega)$ and $L_2(J;H^1(\Omega))$-errors versus number of DoFs in $\mathcal{X}^\delta$.}\label{fig:example 1 d=3}
\end{figure}

\subsubsection{Dependence of approximations on the length of $J$}

We now give an example, which is seemingly a counterexample. However, careful study of the error analysis in Section ~\ref{sec:error analysis} provides enough insight to explain the numerical results obtained here.

We set $u(t, x, y):=\exp(2\pi^2(1-t))\sin(\pi x)\sin(\pi y)$, which satisfies $\partial_t - \triangle_x u = 0$. Furthermore, we define the interval $J_L=(1 - L, 1)$, where we set $L=\frac{1}{8}$ or $L=1$. For both values of $L$, the grid $J_L^\delta$ is constructed by dividing the interval $(0,1)$ into $2^{k+3}$ equal subintervals, and then restricting to $J_L$. The mesh $\tria^\delta$ is again constructed by $2k$ uniform bisections of the initial triangulation $\tria_0$. We choose $\eps=2^{-(k+3)}$ as our regularization parameter.

Our goal is to recover $u(t)$ at time $t = \frac{15}{16}$. In Figure~\ref{fig:length of J}, we show the $L_2(\Omega)$-errors at this time point. Although a better convergence rate is expected for $L = 1$, the errors for $L = \frac{1}{8}$ are clearly smaller. In fact, for $L=1$ we do not observe convergence at all.

This apparent counterexample can be partially explained by comparing two quantities that appear in the error estimates of Corollary~\ref{cor:errors}: the $L_2(\Omega)$-norm of the initial data and the best approximation error $\mathcal{E}_{\mathrm{appr}}(\delta)$ of $u$ in the $\mathcal{X}$-norm. Specifically, we note that
$\|\gamma_0 u\|_{L_2(\Omega)}=\exp(2\pi^2 \cdot \frac{7}{8}) \|\gamma_{\frac{7}{8}} u\|_{L_2(\Omega)},$
and that the best approximation error in the $\mathcal{X}$-norm is also significantly smaller for $L = \frac{1}{8}$ than for $L = 1$. This difference accounts for the unexpectedly favorable performance of approximating $u$ at $t=\frac{15}{16}$ with $L=\frac{1}{8}$. For really fine meshes, the method with $L=1$ is likely to perform better again.

\begin{figure}[h!]
    \centering
    \includegraphics[width=0.5\linewidth]{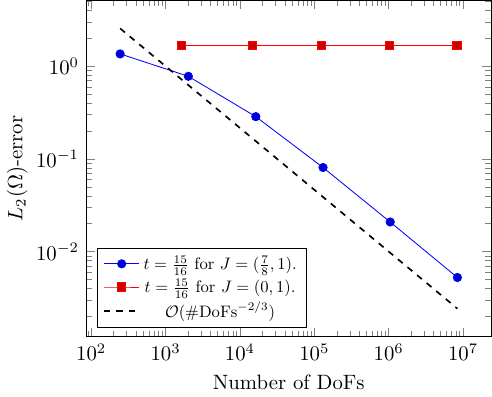}
\caption{Error of numerical approximation at $t =\frac{15}{16}$ versus number of DoFs in $\mathcal{X}^\delta$ for different intervals $J$. Exact solution is $u(t,x,y) = \exp(2\pi^2(1-t))\sin(\pi x)\sin(\pi y)$.}\label{fig:length of J}
\end{figure}

\subsection{Perturbed data}
We now demonstrate the effect of adding perturbations to the end-time data $g$. We will distinguish between two regularization strategies. The first strategy is to choose $\eps$ according to \eqref{eq:epsilon_choice}, that is, $\eps=\|g_{\rm pert}\|_{L_2(\Omega)} + {\rm DoFs}^{-\frac1d}$. The second strategy is to choose $\eps = {\rm DoFs}^{-\frac1d}$. We will first investigate the effect of adding random perturbations, and then we will add more severe perturbations. In both cases, we will consider the time interval $J=(0,\frac18)$, that is, $T=\frac18$.
\subsubsection{Random perturbation}
To achieve random perturbations, we picked a random function $g_{\rm pert}$ in $\mathcal{S}_{J^\delta}^{0,1}$ with $\|g_{\rm pert}\|_{L_2(\Omega)} = 0.01$ and replaced $g$ with $g+ g_{\rm pert}$. In Figure \ref{fig:random pert}, we compare both regularization strategies introduced above. The results indicate that for random perturbations of this type, extra regularization is not required.

\subsubsection{Difficult perturbation}
We now give an example in which it helps to choose the regularization parameter $\eps$ appropriately. Let $$u_n(t,x,y):= e^{2(n\pi)^2 (1-t)}\sin(n\pi x)\sin(n\pi y),$$ then $\|u_n(1,x,y)\|_{L_2(\Omega)} = \frac12$, while $Bu_n=0$ and $\|u_n(0,x,y)\|_{L_2(\Omega}=\frac12 e^{2 (n\pi)^2}$.  

Replacing the exact end-time data $g$ by $g + 0.05 u_n(T, \cdot, \cdot)$, we again compare both regularization strategies. Figure \ref{fig:example perturbation bad} shows that for $n=1$, additional regularization is needed to temper the data error. For larger $n$, the data error has not yet been resolved on the meshes we consider.

\begin{figure}[h!]
    \centering
    \includegraphics[width=0.5\linewidth]{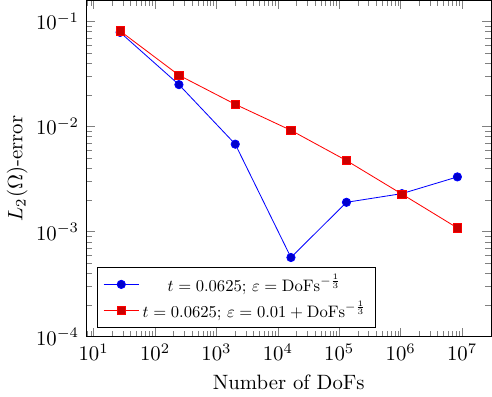}
\caption{Error of numerical approximation at $t =\frac{1}{16}$ versus number of DoFs in $\mathcal{X}^\delta$, in case of random perturbation of $g$.}\label{fig:random pert}
\end{figure}

\begin{figure}[h!]
    \centering
    \includegraphics[width=0.5\linewidth]{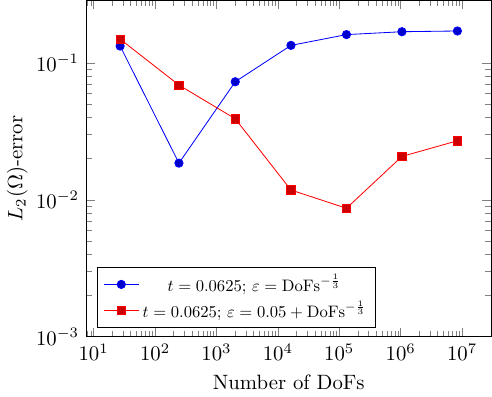}
\caption{Error of numerical approximation at $t =\frac{1}{16}$ versus number of DoFs in $\mathcal{X}^\delta$, in case of perturbation of $g$ with $0.05\cdot 2 \sin(\pi x)(\sin \pi y)$.}\label{fig:example perturbation bad}
\end{figure}

\section{Conclusion}

We have proposed and analyzed a least squares finite element method for backward parabolic problems based on a space-time variational formulation in the natural energy space. Within this framework, we established a new conditional stability estimate in the space-time $L_2(J;H^\beta(\Omega))$-norm, with $\beta\in [0,1)$. This was achieved by designing density arguments that are suitable for functions satisfying the homogeneous parabolic problem.

For the proposed numerical method, we derived \emph{a priori} error bounds expressed in terms of the data error and the best approximation error. In view of the available conditional stability estimates, these error bounds are qualitatively optimal.

To make the method fully implementable, we replaced the dual norm in the least squares functional with a computable discretized dual norm, while preserving the error bounds. The resulting discrete system can be solved efficiently using the PCG method.

\subsection*{Acknowledgment}
The author wishes to thank his advisor Rob Stevenson for
the many helpful comments.


\appendix
\section{Proof of conditional stability}\label{sec:appendix}
\begin{lemma}[\cite{MR155203}]\label{lemma:appendix}
    Let $A(t)\equiv A\in \mathcal{L}(V,V')$ be self-adjoint. Let $u\in \mathcal{X}$ satisfy $Bu=0$. Then we have
    \begin{align}\label{eq:appendix:cond stab}
        \|u(t)\|_H\leq \|u(0)\|_H^{\frac{T-t}{T}}\|u(T)\|_H^{\frac{t}{T}}.
    \end{align} 
\end{lemma}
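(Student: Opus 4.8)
The plan is to prove the bound by establishing that $\phi(t) := \|u(t)\|_H^2$ is \emph{logarithmically convex} on $[0,T]$; this is the logarithmic convexity argument of Agmon--Nirenberg adapted to the present Gelfand-triple setting. Once log-convexity is known the estimate is immediate: writing $t = (1-\tfrac{t}{T})\cdot 0 + \tfrac{t}{T}\cdot T$ and using convexity of $\log\phi$ gives $\log\phi(t)\le \tfrac{T-t}{T}\log\phi(0) + \tfrac{t}{T}\log\phi(T)$, and exponentiating and taking square roots yields exactly \eqref{eq:appendix:cond stab}.

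To obtain log-convexity I would differentiate $\phi$. Using $\partial_t u = -Au$ (which encodes $Bu = 0$) and the fact that $\langle Au,u\rangle_H$ is real because $A$ is self-adjoint, one computes $\phi'(t) = 2\Re\langle \partial_t u, u\rangle_H = -2\langle Au(t),u(t)\rangle_H$. Differentiating once more, using self-adjointness to replace $\langle A\partial_t u, u\rangle_H$ by $\overline{\langle Au,\partial_t u\rangle_H}$ and again substituting $\partial_t u = -Au$, gives $\phi''(t) = 4\|Au(t)\|_H^2$. The decisive step is then the Cauchy--Schwarz inequality $\langle Au,u\rangle_H^2 \le \|Au\|_H^2\,\|u\|_H^2$, which yields $\phi''(t)\phi(t) - \phi'(t)^2 = 4\big(\|Au\|_H^2\|u\|_H^2 - \langle Au,u\rangle_H^2\big)\ge 0$; since $(\log\phi)'' = (\phi''\phi - (\phi')^2)/\phi^2$, this is precisely convexity of $\log\phi$.

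The main obstacle is rigor rather than algebra: the computation of $\phi''$ presupposes enough regularity for $Au(t)$ and $\partial_t u(t)$ to lie in $H$, and in fact for $u(t)$ to lie in the domain of $A^2$ (equivalently $\partial_t u(t)$ in the domain of $A$). I would supply this on every subinterval $[t_0,T]$ with $t_0>0$. Since $A$ is constant and self-adjoint it generates an analytic semigroup on $H$, so the homogeneous solution is smooth on $(0,T]$ with $u(t)\in D(A^k)$ for all $k$; alternatively one may start from Lemma~\ref{lemma:regularity positive t}, which already gives $\partial_t u(t)=-Au(t)\in H$ for $t>0$, and bootstrap to the second level. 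Either route makes the pointwise identities for $\phi'$ and $\phi''$ valid on $(0,T)$, and the embedding $\mathcal{X}\hookrightarrow C(\overline J;H)$ makes $\phi$ continuous on $[0,T]$, so the convexity obtained on the open interval extends to the closed one. The degenerate cases where $\|u(0)\|_H$ or $\|u(T)\|_H$ vanishes are handled separately: by backward uniqueness they force $u\equiv 0$, making the bound trivial.

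Finally, I would note a cleaner, regularity-free alternative. Identifying $A$ with its self-adjoint realization on $H$ and using the spectral theorem, the solution admits the representation $u(t)=e^{-tA}u(0)$, whence $\phi(t)=\int e^{-2t\lambda}\,d\mu(\lambda)$ for the finite positive spectral measure $\mu = d\|E_\lambda u(0)\|_H^2$ (the spectrum is bounded below by coercivity up to shift, so the integral converges). Log-convexity of such an exponential moment is then a one-line consequence of Hölder's inequality, giving $\phi(t)\le \phi(0)^{(T-t)/T}\phi(T)^{t/T}$ directly and bypassing all pointwise differentiation; the only work in this route is justifying the spectral representation of the weak solution.
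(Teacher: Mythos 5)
Your proposal is correct and follows essentially the same route as the paper: both prove logarithmic convexity of $\|u(t)\|_H^2$ by computing the first and second derivatives, using $\partial_t u = -Au$ and self-adjointness to obtain $F'' = 4\|\partial_t u\|_H^2 = 4\|Au\|_H^2$, and concluding via Cauchy--Schwarz. Your treatment is if anything slightly more careful than the paper's, which justifies the differentiation only through the regularity $\partial_t u \in C((0,T];H)$ of Lemma~\ref{lemma:regularity positive t} and does not separately address the degenerate cases $\|u(0)\|_H = 0$ or $\|u(T)\|_H = 0$; your spectral-theorem alternative is a genuine bonus but not needed.
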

\begin{proof}
    Define $F(t):=\|u(t)\|_H^2$. We will prove that $\log F(t)$ is a convex function. For this we compute the first and second derivative of $F$. The first derivative is given by
    \begin{align*}
        F'(t)=2\langle u(t), \partial_t u(t)\rangle_H.
    \end{align*}
    The second derivative is given by
    \begin{align*}
        F''(t)&= 2\langle \partial_t u, \partial_t u\rangle_H + 2\langle u(t), \partial_t^2 u(t)\rangle_H\\
        &=2\|\partial_t u(t)\|_H^2 -2\langle u(t), \partial_tAu(t)\rangle_H\\
        &= 2\|\partial_t u(t)\|_H^2 - 2 \langle A u(t), \partial_t u(t)\rangle_H\\
        &=4 \|\partial_t u(t)\|_H^2,
    \end{align*}
    which is finite thanks to $\partial_t u\in C((0, T];H)$ (see, Lemma~\ref{lemma:regularity positive t}).
    Now we compute 
    \begin{align*}
        F(t)F''(t)-(F'(t))^2 = 4\|u(t)\|_H^2\|\partial_t u(t)\|_H^2-4\langle u(t), \partial_t u(t)\rangle_H\geq 0,
    \end{align*}
    which verifies that $\log F(t)$ is a convex function. It follows that \eqref{eq:appendix:cond stab} holds. 
\end{proof}

\end{document}